\newtheorem{theorem}{Theorem}
\numberwithin{equation}{section}
\title{Dirichlet-Neumann and Neumann-Neumann Waveform Relaxation Methods for PDEs with Time Delay
\thanks{Under review} 
}
\author{
Bankim Chandra Mandal, Deeksha Tomer, \\
  Indian Institute of Technology Bhubaneswar\\
  \texttt{\{a21ma09002, bmandal\}@iitbbs.ac.in} \\
}
\begin{document}
\maketitle

\begin{abstract}
We introduce and compare two domain decomposition based numerical methods, namely the Dirichlet-Neumann and Neumann-Neumann Waveform Relaxation methods (DNWR and NNWR respectively), tailored for solving partial differential equations (PDEs) incorporating time delay. Time delay phenomena frequently arise in various real-world systems, making their accurate modeling and simulation crucial for understanding and prediction. We consider a series of model problems, ranging from Parabolic, Hyperbolic to Neutral PDEs with time delay and apply the iterative techniques DNWR and NNWR for solving in parallel. We present the theoretical foundations, numerical implementation, and comparative performance analysis of these two methods. Through numerical experiments and simulations, we explore their convergence properties, computational efficiency, and applicability to various types of PDEs with time delay.
\end{abstract}

\keywords{PDE with Time Delay \and Domain Decomposition \and Waveform Relaxation \and Dirichlet-Neumann; Neumann-Neumann.}

\section{Introduction}

In many real-world scenarios, time lags often exist between actions taken and their resulting responses. 
For example, in medical science \cite{schiesser2019time}, the delay is the time it takes for symptoms to show up after someone gets infected, called the incubation period. In population studies \cite{kuang}, delays occur because individuals only start reproducing after reaching a certain age and in control theory, delays happen because signals and technology processes take time to happen.
Delay Partial Differential Equations (Delay PDEs) serve as valuable modelling tools that offer a more realistic representation of processes characterized by time lags\cite{schiesser2019time,kuang,Wu,hetzer1995functional,hetzer1996global,covid}. 

 Thus, Delay PDEs are widely used in real-world applications across various fields, including biology, control theory, medicine, and climate modelling
 (see \cite{Wu}). In a sequence of papers, Hetzer and others \cite{hetzer1995functional, hetzer1996global} have shown that reaction-diffusion equations with delays emerge organically in the investigation of climate models. Cheng-Cheng Zhu and Jiang Zhu \cite{covid} have developed a delay reaction-diffusion model that closely examines the real-world spread of the COVID-19 epidemic. This model incorporates factors such as relapse, time delays, home quarantine, and a spatio-temporal heterogeneous environment, all of which play significant roles in influencing the spread of COVID-19. For modelling population dynamics, the delay diffusion model is used (see \cite{population}). Wave equations incorporating delay terms serve as fundamental tools for analyzing oscillatory phenomena \cite{oscillation}, including aftereffects,
time lags or hereditary characteristics 
\cite{Wang,Liu,Wu}, as the deformation of visco-elastic materials \cite{viscoelasticity} or the retarded control of the
dynamics of flexible structures \cite{flexible,beam}. Time delays naturally occur in different communication systems like satellite links, wireless networks, or internet protocols. Utilizing PDEs that incorporate time delay enables the modeling and analysis of these systems, aiming to enhance data transmission efficiency, optimize network performance, and minimize latency \cite{forouzan2007data}.

To solve Delay PDE, we generally apply the separation of variables technique to derive Fourier series solutions in spatial variables for linear initial-boundary value problems involving parabolic and hyperbolic PDEs with constant or proportional delay, as well as various boundary conditions \cite{polyanin2023delay}. 
Numerical methods for solving Delay PDEs include Method of lines which involves converting a Delay PDE into a set of Delay ODEs. Finite-difference methods, employing implicit schemes and weighting schemes, are commonly used to numerically solve Delay PDEs \cite{polyanin2023delay}. However, there remains a lack of efficient numerical methods for solving these equations, especially in the context of parallel computational techniques. Parallel approaches play a vital role in solving extremely large-scale numerical problems in a time-bound manner. Domain Decomposition is one of the important techniques, used nowadays for parallel computation of solutions for differential equations in general \cite{Ong2016PipelineIO,kwok2019schwarz}. These methods involve dividing the computational domain into multiple smaller subdomains. The approach then involves separating the problem by providing an initial estimate along the interfaces of these subdomains and solving each subdomain problem simultaneously. Throughout each iteration, the method monitors for inconsistencies, such as non-smooth transitions, along the interfaces between subdomains. This iterative process continues until a uniform and smooth solution across the entire domain is attained. 
 The domain decomposition approach was first introduced in 1870 by Schwarz \cite{schwarz}, who demonstrated the existence of solutions to the Laplace equation in arbitrary domains. This was achieved by dividing the domain into overlapping subdomains and sequentially computing solutions within each subdomain, ensuring convergence using the maximum principle. P.L Lions \cite{lions1,lions2,lions3} later extended this method by proposing a parallel algorithm that modified the transmission conditions from the original Schwarz approach.

DD methods for steady problems include overlapping Classical Schwarz and Optimized Schwarz methods, while non-overlapping DD methods for steady problems include Dirichlet-Neumann and Neumann-Neumann methods. Dirichlet-Neumann method was first given by  Bjørstad and Widlund \cite{Bjorstad} while the Neumann-Neumann method was originally considered by  Bourgat et al. \cite{Bourgat}. DD methods for evolution problem include Classical Schwarz Waveform Relaxation (SWR), Optimised Schwarz Waveform Relaxation, Dirichlet-Neumann Waveform Relaxation (DNWR) \cite{gander,ganderparabolic} and Neumann-Neumann Waveform Relaxation (NNWR) \cite{gander,ganderparabolic} methods. DNWR and NNWR are substructuring analogues of Dirichlet-Neumann and Neumann-Neumann methods. DNWR relies on dividing the spatial domain into non-overlapping subdomains. Each iteration then consists of solving subdomain problems in both space and time, along with enforcing interface conditions. Finally, the iteration concludes with an update step. NNWR extends the Neumann-Neumann approach for elliptic problems to the time-dependent problems. In the first phase of the Neumann-Neumann method, subdomain problems are solved with Dirichlet interface conditions. This is followed by a correction step, which incorporates Neumann boundary conditions along the interfaces. Both DNWR and NNWR converge faster in terms of iteration efficiency than Classical Schwarz and Optimised Schwarz for Heat \cite{ganderparabolic} and Wave \cite{gander} equations. Because of their non-overlapping divisions, they allow to use different discretization schemes in different subdomains. To improve computational efficiency and handle large-scale problems by breaking them down into smaller, more manageable subproblems and iteratively refining the solution in parallel, pipeline and adaptive pipeline versions of DNWR and NNWR algorithms are developed and studied in \cite{Ong2016PipelineIO} and \cite{kwok2019schwarz}.

We implement these substructuring Waveform Relaxation methods, namely DNWR and NNWR, to solve Partial Differential Equations with time delay. This methodology was specifically applied to Reaction-Diffusion Equations involving time delay, Neutral PDEs, and Hyperbolic delay PDEs. We will see their convergence behavior and performance numerically and see some theoretical aspects.

Although significant attention has been devoted to the theoretical investigation of Delay PDEs over the years, there remains a gap in implementing numerical methodologies for solving such equations in parallel. Zubik-Kowal and Vandewalle \cite{kowal} analysed the convergence of waveform relaxation methods \cite{Lelarasmee,Svandewalle}, including Gauss-Seidel and Jacobi types, for addressing discretized Delay PDE problems. In (2005), Gander and Vandewalle investigated the overlapping optimized Schwarz domain decomposition method for Parabolic  PDEs incorporating time delay \cite{vandewalle}. In 2012, Shu-Lin conducted a convergence analysis of the overlapping Schwarz waveform relaxation (OSWR) method for reaction-diffusion equations with time delay \cite{shulin, shulinquasi}.

The analysis of NNWR and DNWR for delay problems has not yet been done. In the same direction we make the following novel contribution:
\begin{itemize}
    \item We study the behavior of DNWR and NNWR for Delay PDEs. These methods are effective parallel numerical methods for solving PDEs and haven't been used for Delay PDEs yet.
    \item For Parabolic Delay PDE, the proposed method achieves better convergence as compared to the existing numerical methods.
    \item We also perform experiments for DNWR and NNWR in the case of the wave equation with delay. For a specific relaxation parameter, we demonstrate the convergence of both DNWR and NNWR methods in two iterations for $\theta =1/2$ and $\theta =1/4$, respectively. We also implement these methods for Neutral PDE and observe similar results.
    \item We theoretically prove convergence for an infinite domain for Parabolic PDE and for a symmetric domain in the case of Hyperbolic PDE.
    \item We observe the behaviour of these methods for multiple subdomains (when the domain is divided into more than 2 subdomains).
\end{itemize}
 
 The structure of this paper is as follows: In Section 2, we utilize model equation 1, which represents Parabolic PDE with constant time delay, to implement the DNWR and NNWR algorithms. Section 3 focuses on convergence results for DNWR and NNWR for  Hyperbolic delay PDEs, where we derive Theorem 3 for DNWR and Theorem 4 for NNWR. In Section 4, we conduct an analysis of DNWR and NNWR for Neutral PDEs, resulting in Theorems 5 and 6. Section 5 presents numerical experiments to validate the effectiveness of the theorems obtained in the previous sections and the comparison of DNWR and NNWR for the Parabolic PDE with time delay with the existing Classical Schwarz and Optimised Schwarz methods. This section also includes experiments for multiple subdomain cases. %
 In Section 6, we conclude this paper with some final remarks.
 
 \section{Parabolic PDE with Time Delay}\label{sec1}
As a model problem, we first consider the Reaction-Diffusion Equation with a constant time delay (this equation appears in \cite{shulin}).\\
\begin{equation}\label{eq_1}
\left\{\begin{array}{rl}
\partial u_t-\upsilon ^2\partial ^2u_{xx}+a_1u(x, t)+a_2u(x, t-\tau ) &=f(x, t),\ \ \ (x, t)\in \mathbb{R}\times (0, T), \\ 
 u(x, t) &=u_0(x, t), \ \ \  (x, t) \in \mathbb{R}\times [-\tau , 0], \\ 
 u(\pm\infty , t ) &=0, \ \ \ t\in (0, T) , 
\end{array}\right.
\end{equation}
\\
where $\tau>0$ is the time lag, $ \upsilon>0$ is the diffusion coefficient and $a_1, a_2$ are real constants with $a_2\neq0$.
\subsection{Convergence of DNWR}
 We study the convergence behaviour of DNWR method for the Problem (\ref{eq_1}). Since the problem
 is linear, we consider the corresponding error equations where $f(x,t)=0$ and $u_0(x,t)=0$. The spatial domain is partitioned into two non-overlapping subdomains, denoted as $\Omega_1 =(-\infty, L)$ and $\Omega_2 =(L,\infty)$. Taking an initial guess as $h^0(t)$ on the interface boundary, we first solve a Dirichlet problem in subdomain $\Omega_1$ followed by a Neumann problem in subdomain $\Omega_2$. These two steps constitute a single iteration, and after each iteration k, we update the interface value {$h^k(t)$} for $k=1,2,\ldots$. The DNWR reads as follows:
 \\
Dirichlet Part:
\begin{equation}\label{eq_2}
\left\{\begin{array}{rl}
\partial_t e_1^k-\upsilon ^2\partial_{xx} e_1^k+a_1e_1^k(x, t)+a_2e_1^k(x, t-\tau ) &=0, \ \ \ (x, t)\in \Omega_1\times (0, T),  \\ 
  e_1^k(x, t)&=0, \  \ \ (x, t)\in \Omega_1\times [-\tau, 0], \\ 
e_1^k(-\infty, t)&=0, \ \ \ t\in(0, T),\\ 
e_1^k(L, t)&=h^{k-1}(t), \ \ \  t\in(0, T).
\end{array}\right.
\end{equation}\\
Neumann Part:
\begin{equation}\label{eq_3}
\left\{\begin{array}{rl}
\partial_t e_2^k-\upsilon ^2\partial_{xx} e_2^k+a_1e_2^k(x, t)+a_2e_2^k(x, t-\tau )&=0, \ (x, t)\in \Omega_2\times (0, T),  \\ 
  e_2^k(x, t)&=0, \  (x, t)\in \Omega_2\times [-\tau, 0], \\ 
  \partial_x e_2^k(L, t)&=\partial_x e_1^k(L, t), \  t\in(0, T),\\
e_2^k(\infty, t)&=0,  \  t\in(0, T).

\end{array}\right.
\end{equation}\\
The update condition along the interface is given by 
\begin{equation}\label{eq_33}
h^{k}(t)=\theta e_2^{k}\mid _{\Gamma\times(0, T)} +(1-\theta) h^{k-1}(t).
\end{equation}\\
The parameter $\theta$ is the relaxation parameter lying in $(0,1)$. The main goal is to analyse how the error $h^k(t)$ introduced on the interface boundary decreases and goes to zero as $k\rightarrow \infty$ so that we get a smooth solution on the entire domain.\\


\begin{theorem}

 For the Parabolic PDE (\ref{eq_1}) with time delay, the DNWR algorithm (\ref{eq_2})-(\ref{eq_33}) converges linearly for $0<\theta<1$, $\theta\neq 1/2$. For $\theta=1/2$ it converges within two iterations, irrespective of the time window.
 \label{thm:thmdnwr}
\end{theorem}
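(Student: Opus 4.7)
The plan is to use the Laplace transform in time, following the strategy standard for DNWR on the heat equation but adapted to the delay term. Because the error equations have trivial history on $[-\tau,0]$, the delayed term transforms cleanly: applying $\mathcal{L}$ in $t$ to (\ref{eq_2})--(\ref{eq_3}) sends $a_2 e^k(x,t-\tau)$ to $a_2 e^{-s\tau}\hat e^k(x,s)$, so each subdomain problem reduces to the ODE
\begin{equation*}
\upsilon^2\,\partial_{xx}\hat e_i^k(x,s)=\bigl(s+a_1+a_2 e^{-s\tau}\bigr)\hat e_i^k(x,s).
\end{equation*}
Defining $\sigma(s)=\sqrt{\bigl(s+a_1+a_2 e^{-s\tau}\bigr)/\upsilon^2}$ with the branch chosen so that $\operatorname{Re}\sigma(s)>0$, the decaying solutions on the two half-lines are $\hat e_1^k(x,s)=A(s)e^{\sigma(s)(x-L)}$ on $\Omega_1$ and $\hat e_2^k(x,s)=B(s)e^{-\sigma(s)(x-L)}$ on $\Omega_2$.

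Next I would enforce the transmission conditions. The Dirichlet condition in (\ref{eq_2}) gives $A(s)=\hat h^{k-1}(s)$, whence $\partial_x\hat e_1^k(L,s)=\sigma(s)\hat h^{k-1}(s)$. Matching $\partial_x\hat e_2^k(L,s)=-\sigma(s)B(s)$ to this in (\ref{eq_3}) yields $B(s)=-\hat h^{k-1}(s)$, so that the trace picked up by the Neumann subproblem at the interface is $\hat e_2^k(L,s)=-\hat h^{k-1}(s)$. Substituting into the Laplace transform of (\ref{eq_33}) collapses the iteration to the remarkably clean scalar recurrence
\begin{equation*}
\hat h^k(s)=\bigl(1-2\theta\bigr)\hat h^{k-1}(s),
\end{equation*}
which is independent of $s$, $\tau$, $a_1$, $a_2$, and $\upsilon$. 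This is the payoff of the symmetric split of $\mathbb{R}$ into two half-lines.

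From here the conclusions follow by straightforward inversion. For $\theta=1/2$ the factor vanishes, so $\hat h^1(s)\equiv 0$, hence $h^1(t)\equiv 0$; in the second sweep the Dirichlet subproblem produces $e_1^2\equiv 0$, the Neumann subproblem inherits homogeneous flux, and $e_2^2\equiv 0$, giving termination in two iterations for any time window $T$. For $\theta\neq 1/2$ with $0<\theta<1$, iterating gives $\hat h^k(s)=(1-2\theta)^k\hat h^0(s)$; invoking the Laplace isometry on a suitable vertical line $\operatorname{Re}s=\gamma$ and using Plancherel yields $\|h^k\|_{L^2(0,T)}\le|1-2\theta|^k\|h^0\|_{L^2(0,T)}$, which is the asserted linear convergence with contraction factor $|1-2\theta|<1$.

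The main obstacle I anticipate is technical rather than structural: justifying the Laplace-transform computation for the delay term and fixing a branch of $\sigma(s)$ so that both $e^{\sigma(s)(x-L)}$ on $(-\infty,L)$ and $e^{-\sigma(s)(x-L)}$ on $(L,\infty)$ decay simultaneously. For $\operatorname{Re}s$ sufficiently large the principal branch works because $s$ dominates $a_2e^{-s\tau}$, so the Bromwich contour can be shifted to such a line; this validates the manipulations and also supplies enough decay to push the final norm bound through without extracting any $s$-dependent constant. Once the branch issue is settled, the rest of the argument is a direct transcription of the heat-equation DNWR analysis, with the delay absorbed harmlessly into $\sigma(s)$.
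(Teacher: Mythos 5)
Your proposal is correct and follows essentially the same route as the paper's proof: Laplace transform in time (with the delay term becoming $a_2e^{-s\tau}\hat e$ thanks to the zero history), explicit decaying solutions on the two half-lines, flux matching giving $\hat e_2^k(L,s)=-\hat h^{k-1}(s)$, and the $s$-independent recurrence $\hat h^k=(1-2\theta)\hat h^{k-1}$. Your added care about the branch of $\sigma(s)$ and the Plancherel estimate is a welcome refinement, but the substance is identical to the paper's argument.
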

\begin{proof}
 We prove the result using the Laplace transform technique. Upon applying the Laplace transform to the DNWR algorithm  (\ref{eq_2})-(\ref{eq_33}), for $k=1,2,3\ldots$, the error equation is transformed into the following form:

\[
\left\{\begin{array}{rl}
(s+a_1+a_2e^{-\tau s})\hat e_1^k-\upsilon ^2\partial _{xx}\hat e_1^k&=0,\quad\text{in}\quad\Omega_1
\\ 
\hat e_1^k(-\infty,s)&=0 ,\\ 
\hat e_1^k(L,s)&=\hat h^{k-1}(s),\\
\end{array}\right.
\]
\[
\left\{\begin{array}{rl}
(s+a_1+a_2e^{-\tau s})\hat e_2^k-\upsilon ^2\partial _{xx}\hat e_2^k&=0, \quad\text{in}\quad\Omega_2\\ 
\partial_x\hat e_2^k(L,s)&=\partial_x\hat e_1^k(L,s),\\
\hat e_2^k(\infty,s)&=0 ,\\ 
\end{array}\right.
\]
This is followed by the update step:\\
$$\hat h^k(s)=\theta\hat e^k_2(L,s)+(1-\theta)\hat h^{k-1}(s).$$
On solving the boundary value problems in the Dirichlet and Neumann step  
we get,
\begin{align*}
\hat e_1^k(x,s) &=\hat h^{k-1}(s)e^{\frac{\sqrt{s+a_1+a_2e^{-\tau s}}(x-L)}{\upsilon}},
\\
\hat e_2^k(x,s) &=-\hat h^{k-1}(s)e^{-\frac{\sqrt{s+a_1+a_2e^{-\tau s}}(x-L)}{\upsilon}}.
\end{align*}
Finally, from the update condition we get,
\begin{align*}
\hat h^k(s) &=(1-2\theta)^k \hat h^0(s).   
\end{align*}
Then the inverse Laplace transform yields,
$$
h^k(t)=(1-2\theta)^k  h^0(t).
$$
So for $\theta$ = $1/2$, the DNWR converges in two iterations. For other values of $\theta$ in (0, 1), it exhibits linear convergence. 
\end{proof}

\subsection{Convergence of NNWR}

We consider the same non-overlapping decomposition of $\Omega$ to apply NNWR to (\ref{eq_1}). Assuming an initial guess $g^0(t)$ along the interface $\Gamma$, we solve Dirichlet subproblem in each $\Omega_i$ in parallel, followed by Neumann solve on each $\Omega_i$. For $k=1,2,\ldots,$ we do:
Dirichlet solve on subdomains $\Omega_i$, $i=1,2$:
\begin{equation}
\left\{\begin{array}{rl}
\partial_t e_i^k-\upsilon ^2\partial _{xx}^2e_i+a_1e_i^k(x, t)+a_2e_i(x, t-\tau )&=0, \ \ \ (x, t)\in \Omega_i\times (0, T), \\ 
 e_i^k(x, t)&=0, \ \ \ (x, t)\in \Omega_i\times[-\tau,0], \\ 
e_i^k&=0, \ \ \  \textnormal{on }  \partial \Omega_i\backslash\Gamma,  \\ 
e_i^k&=g^{k-1},\ \ \  \textnormal{on } \Gamma, 
\end{array}\right.
\end{equation}
Neumann solve on subdomains $\Omega_i$, $i=1,2$:
\begin{equation}
\left\{\begin{array}{rl}
\partial_t \phi_i^k-\upsilon ^2\partial ^2_{xx}\phi_i^k+a_1\phi_i^k(x, t)+a_2\phi_i^k(x, t-\tau )&=0,\ \ \  (x, t)\in \Omega_i\times (0, T),  \\ 
 \phi_i^k(x, t)&=0,\ \ \  (x, t)\in \Omega_i\times[-\tau,0], \\ 
\phi_i^k&=0,\ \ \   \textnormal{on } \partial \Omega_i\backslash\Gamma,  \\ 
\partial _x\phi_i^k&=\partial _xe_1^k-\partial_xe_2^k,\ \ \   \textnormal{on }   \Gamma,  
\end{array}\right.
 \end{equation}

The update condition along the interface is:\\
\begin{equation}
g^{k}(t)=g^{k-1}(t)-\theta( \phi_1^{k}\mid _{\Gamma\times(0, T)} +\phi_2^{k}\mid _{\Gamma\times(0, T)}).
\end{equation}\\
The parameter $\theta\in(0,1)$ is the relaxation parameter.
 We examine whether $g^k(t)$ converges to zero as $k$
 approaches infinity. \\


\begin{theorem}
For Parabolic PDE with time delay (\ref{eq_1}), the NNWR algorithm converges linearly for $0<\theta<1/2$, $\theta\neq 1/4$. The algorithm converges in two iterations for $\theta=1/4$, irrespective of the time window.
\label{thm:thmnnwr}
\end{theorem}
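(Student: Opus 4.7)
The plan is to parallel the DNWR proof of Theorem~\ref{thm:thmdnwr} by applying the Laplace transform in time to the two Dirichlet and two Neumann NNWR sub-problems together with the update rule. Because the error equations have vanishing initial history on $[-\tau,0]$, the delay term transforms cleanly to $a_2 e^{-\tau s}\hat e_i^k$, and each sub-problem reduces to a constant-coefficient ODE in $x$ governed by the same characteristic quantity $\sigma(s) := \sqrt{s+a_1+a_2 e^{-\tau s}}/\upsilon$ that appeared in the DNWR analysis; I would pick the branch with positive real part so as to enforce decay at $x=\pm\infty$.

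First, I would solve the two Dirichlet sub-problems in parallel to get $\hat e_1^k(x,s) = \hat g^{k-1}(s)\,e^{\sigma(s)(x-L)}$ on $\Omega_1$ and $\hat e_2^k(x,s) = \hat g^{k-1}(s)\,e^{-\sigma(s)(x-L)}$ on $\Omega_2$. Differentiating in $x$ at the interface produces equal-magnitude, opposite-sign fluxes, so their jump $\partial_x\hat e_1^k(L,s) - \partial_x\hat e_2^k(L,s) = 2\sigma(s)\hat g^{k-1}(s)$ is the common Neumann datum driving both correction problems. Interpreting the interface condition with outward-normal derivatives (so that $\partial_{n_1}=\partial_x$ on the left of $\Gamma$ and $\partial_{n_2}=-\partial_x$ on the right), the two Neumann responses turn out symmetric and each evaluates at the interface to $\hat\phi_i^k(L,s) = 2\hat g^{k-1}(s)$.

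Substituting into the Laplace-transformed update then produces the $s$-independent scalar recursion $\hat g^k(s) = (1-4\theta)\hat g^{k-1}(s)$, which inverts term-by-term to $g^k(t) = (1-4\theta)^k g^0(t)$. The two cases of the theorem follow immediately: for $\theta = 1/4$ we have $g^1 \equiv 0$, giving termination within two iterations regardless of the time window $T$, and for $\theta \in (0,1/2)\setminus\{1/4\}$ the contraction factor $|1-4\theta| < 1$ yields linear convergence uniformly in $T$. The main technical obstacle --- the same one implicit in the DNWR proof --- is justifying the branch choice for $\sigma(s)$ in the presence of the transcendental term $a_2 e^{-\tau s}$ so that the selected exponentials are genuinely integrable on the infinite half-lines and the termwise inversion of the Laplace transform is legitimate; notably, the algebraic recursion itself does not see the delay, which is why the contraction factor matches the classical no-delay Neumann--Neumann value $1-4\theta$.
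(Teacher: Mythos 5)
Your proposal is correct and follows essentially the same route as the paper: Laplace transform in time, explicit exponential solutions of the Dirichlet and Neumann subproblems on the two half-lines, the $s$-independent scalar recursion $\hat g^k(s)=(1-4\theta)\hat g^{k-1}(s)$, and termwise inversion. In fact your careful treatment of the outward-normal convention gives $\hat\phi_2^k(L,s)=+2\hat g^{k-1}(s)$, which is the sign needed for the sum $\hat\phi_1^k(L,s)+\hat\phi_2^k(L,s)=4\hat g^{k-1}(s)$; the paper's displayed formula for $\hat\phi_2^k$ carries a spurious minus sign, though its final conclusion agrees with yours.
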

\begin{proof}
 The error equation for $k=1,2,3\ldots$ is transformed into the following form by applying the Laplace transform to the NNWR algorithm,  \\

\[
\left\{\begin{array}{rl}
(s+a_1+a_2e^{-\tau s})\hat e_1^k&=\upsilon ^2\partial _{xx}\hat e_1^k,\quad\text{in}\quad\Omega_1,\\ 
\hat e_1^k(-\infty,s)&=0,\\ 
\hat e_1^k(L,s)&=\hat g^{k-1}(s),\\
\end{array}\right.\ \  
\left\{\begin{array}{rl}
(s+a_1+a_2e^{-\tau s})\hat e_2^k&=\upsilon ^2\partial _{xx}\hat e_2^k,\quad\text{in}\quad\Omega_2,\\ 
\hat e_2^k(\infty,s)&=0,\\ 
\hat e_2^k(L,s)&=\hat g^{k-1}(s),\\
\end{array}\right.
\]\\
\[
\left\{\begin{array}{rl}
(s+a_1+a_2e^{-\tau s})\hat \phi_1^k&=\upsilon ^2\partial _{xx}\hat \phi_1^k,\quad\text{in}\quad\Omega_1,\\ 
\hat \phi_1^k(-\infty,s)&=0,\\ 
\partial_x\hat \phi_1^k(\Gamma)&=\partial_x\hat e_1^k-\partial_x\hat e_2^k,\\
\end{array}\right.
\left\{\begin{array}{rl}
(s+a_1+a_2e^{-\tau s})\hat \phi_2^k&=\upsilon ^2\partial _{xx}\hat \phi_2^k,\quad\text{in}\quad\Omega_2,\\ 
\hat \phi_2^k(\infty,s)&=0,\\ 
-\partial_x\hat \phi_2^k(\Gamma)&=\partial_x\hat e_1^k-\partial_x\hat e_2^k,\\
\end{array}\right.
\]\\

followed by the update step,
\begin{equation}
\hat g^{k}(s)=\hat g^{k-1}(s)-\theta(\hat \phi_1^{k} (L,s)+\hat \phi_2^{k}(L, s)).
\end{equation}
Solving the Dirichlet boundary value problem produces; 

\begin{equation*}
\begin{aligned}
\hat e_1^k(x,s) &=\hat g^{k-1}(s)e^{\frac{\sqrt{s+a_1+a_2e^{-\tau s}}(x-L)}{\upsilon}},
\\
\hat e_2^k(x,s) &=\hat g^{k-1}(s)e^{\frac{-\sqrt{s+a_1+a_2e^{-\tau s}}(x-L)}{\upsilon}}.
\end{aligned}
\end{equation*}
Solving the Neumann boundary problem yields 
\begin{equation*}
\begin{aligned}
\hat \phi_1^k(x,s) &=2\hat g^{k-1}(s)e^{\frac{\sqrt{s+a_1+a_2e^{-\tau s}}(x-L)}{\upsilon}},
\\
\hat \phi_2^k(x,s) &=-2\hat g^{k-1}(s)e^{-\frac{\sqrt{s+a_1+a_2e^{-\tau s}}(x-L)}{\upsilon}}.
\end{aligned}
\end{equation*}

By induction, the relation for the update step along the interface becomes,\\
\begin{equation*}
 \begin{aligned}
\hat g^k(s) &=(1-4\theta)^k \hat g^0(s),
\end{aligned}
\end{equation*}
By inverse Laplace transform, we have\\
$$ g^k(t)= \left ( 1-4\theta \right )^kg^{0}(t).$$\\
So for $\theta=1/4$, the NNWR algorithm converges in 2 iterations. For other $\theta$ in $(0,1/2)$, convergence is linear.
\end{proof}

 \section{Hyperbolic PDEs with Time Delay}
    As the second model problem, we consider a linear wave-type equation with constant time delay (this equation appears in \cite{Rodriguez}),
    $$ u_{tt}=c^2u_{xx}+ \lambda u(x,t-\tau), t>\tau, -a\leq x\leq b $$
with an initial condition
$$
 u(x,t)=\phi (x,t), 0\leq t\leq \tau, -a\leq x\leq b $$
and Dirichlet boundary conditions
\begin{equation}
\label{eq_9}
 u(-a,t)=g(t),u (b,t)=h(t), t\geq 0.
 \end{equation}
where $\lambda$ is a free parameter and $c^2$ is the elasticity coefficient. Similar to the parabolic case, we analyse the error equation so the initial and boundary conditions and history functions are taken as zero.

  \subsection{Convergence Results of DNWR :
  }
    We formulate the DNWR algorithm for the error equations for the problem (\ref{eq_9}). Given an initial guess $h^0(t)$ along the interface \{$x=0$\}, we solve for $k=1,2,\ldots$.

      Dirichlet Part:
\begin{equation}
\left\{\begin{array}{rl}\label{eq_10}
\partial_{tt} e_1^k-c^2\partial_{xx} e_1^k-\lambda e_1^k(x, t-\tau ) &=0, \ \ \ (x, t)\in \Omega_1\times (0, T),  \\ 
  e_1^k(x, t)&=0, \  \ \ (x, t)\in \Omega_1\times [-\tau, 0], \\ 
   \partial_t e_1^k(x, t)&=0, \  \ \ (x, t)\in \Omega_1\times [-\tau, 0], \\ 
e_1^k(-a, t)&=0, \ \ \ t\in(0, T),\\ 
e_1^k(0, t)&=h^{k-1}(t), \ \ \  t\in(0, T).
\end{array}\right.
\end{equation}\\
Neumann Part:
\begin{equation}\label{eq_11}
\left\{\begin{array}{rl}
\partial_{tt} e_2^k-c ^2\partial_{xx} e_2^k-\lambda e_2^k(x, t-\tau )&=0, \ (x, t)\in \Omega_2\times (0, T),  \\ 
  e_2^k(x, t)&=0, \  (x, t)\in \Omega_2\times [-\tau, 0], \\ 
  \partial_t e_2^k(x, t)&=0, \  \ \ (x, t)\in \Omega_1\times [-\tau, 0], \\ 
  \partial_x e_2^k(0, t)&=\partial_x e_1^k(0 , t), \  t\in(0, T),\\
e_2^k(b, t)&=0,  \  t\in(0, T).

\end{array}\right.
\end{equation}\\
The update condition along the interface, for a relaxation parameter $\theta \in (0,1)$ is;\\
\begin{equation}\label{eq_12}
h^{k}( t)=\theta e_2^{k}\mid _{\Gamma\times(0, T)} +(1-\theta) h^{k-1}(t).
\end{equation}\\

  \begin{theorem}

 For equal subdomains, the DNWR method (\ref{eq_10})-(\ref{eq_12}) converges linearly for $0<\theta<1$, $\theta\neq 1/2$. For $\theta=1/2$ it converges within two iterations, irrespective of the time window.
 \label{thm:thmdnwr}
\end{theorem}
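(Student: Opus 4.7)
The plan is to mirror the Laplace-transform argument used in the parabolic proofs (Theorems \ref{thm:thmdnwr} and \ref{thm:thmnnwr}), but now on the finite, symmetric spatial interval $[-a,b]$ with $a=b$. First, I would take the Laplace transform in $t$ of the error system (\ref{eq_10})-(\ref{eq_12}); since the initial and history data vanish, the delay term $\lambda e_i^k(x,t-\tau)$ transforms to $\lambda e^{-\tau s}\hat e_i^k(x,s)$, and the time second derivative produces the $s^2$ factor. The transformed subdomain equations both become
\[
\partial_{xx}\hat e_i^k(x,s)\;=\;\sigma(s)^2\,\hat e_i^k(x,s),\qquad \sigma(s):=\frac{\sqrt{s^2-\lambda e^{-\tau s}}}{c},
\]
which is the only place the delay enters; the structural solution of the BVPs is then the standard $\sinh/\cosh$ machinery.

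Next, I would solve the two subdomain BVPs explicitly. On $\Omega_1=(-a,0)$ the Dirichlet data $\hat e_1^k(-a,s)=0$ and $\hat e_1^k(0,s)=\hat h^{k-1}(s)$ force
\[
\hat e_1^k(x,s)=\hat h^{k-1}(s)\,\frac{\sinh\!\bigl(\sigma(s)(x+a)\bigr)}{\sinh\!\bigl(\sigma(s)a\bigr)},\qquad \partial_x\hat e_1^k(0,s)=\sigma(s)\coth\!\bigl(\sigma(s)a\bigr)\,\hat h^{k-1}(s).
\]
On $\Omega_2=(0,b)$ the Dirichlet condition at $b$ and the matched Neumann flux at $0$ give
\[
\hat e_2^k(x,s)=-\,\hat h^{k-1}(s)\,\coth\!\bigl(\sigma(s)a\bigr)\,\frac{\sinh\!\bigl(\sigma(s)(b-x)\bigr)}{\cosh\!\bigl(\sigma(s)b\bigr)},
\]
so that
\[
\hat e_2^k(0,s)=-\hat h^{k-1}(s)\,\coth\!\bigl(\sigma(s)a\bigr)\tanh\!\bigl(\sigma(s)b\bigr).
\]

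The key simplification is the equal-subdomain hypothesis $a=b$: the product $\coth(\sigma a)\tanh(\sigma b)$ collapses to $1$ identically in $s$, so $\hat e_2^k(0,s)=-\hat h^{k-1}(s)$. Plugging into the transformed update $\hat h^k(s)=\theta\,\hat e_2^k(0,s)+(1-\theta)\hat h^{k-1}(s)$ yields the clean recursion
\[
\hat h^k(s)=(1-2\theta)\,\hat h^{k-1}(s)=(1-2\theta)^k\,\hat h^0(s),
\]
and inverting the Laplace transform gives $h^k(t)=(1-2\theta)^k h^0(t)$. This delivers both conclusions: nilpotent convergence in two iterations at $\theta=1/2$, and linear convergence with contraction factor $|1-2\theta|<1$ for every other $\theta\in(0,1)$, uniformly in the time window.

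The main obstacle I anticipate is justifying the inverse Laplace transform through the composite symbol $\sigma(s)=c^{-1}\sqrt{s^2-\lambda e^{-\tau s}}$: unlike the pure parabolic case, $\sigma(s)^2$ is not monotone along vertical lines and may develop branch behavior driven by the delay exponential, so one must verify that $\sinh(\sigma(x+a))/\sinh(\sigma a)$ and $\sinh(\sigma(b-x))/\cosh(\sigma b)$ remain bounded on a right half-plane $\mathrm{Re}(s)>s_0$ sufficient for inversion, and that the decaying branch is selected consistently. Once $a=b$ reduces the interface symbol to the constant $-1$, however, this complication disappears from the recursion itself and survives only in checking that $h^0\in L^2(0,T)$ (or a suitable exponentially weighted space) implies the same for each $h^k$, which is immediate from $|1-2\theta|\le 1$. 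Everything else is routine hyperbolic-function bookkeeping.
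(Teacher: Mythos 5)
Your proposal follows essentially the same route as the paper's proof: Laplace transform of the error system, explicit $\sinh/\cosh$ solutions of the two subdomain BVPs, the interface symbol $-\coth(\sigma a)\tanh(\sigma b)$ collapsing to $-1$ when $a=b$, and the resulting recursion $\hat h^k=(1-2\theta)^k\hat h^0$. Your bookkeeping is in fact slightly more careful (you carry the minus sign on $\hat e_2^k$ explicitly, which the paper's displayed formula omits but its update relation implicitly uses), and your remark about justifying the inversion through the symbol $\sqrt{s^2-\lambda e^{-\tau s}}$ flags a point the paper passes over silently.
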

\begin{proof}

On applying the Laplace transform to the subproblems (\ref{eq_10})-(\ref{eq_11}) and to the update condition (\ref{eq_12}), we get the subdomain solutions as
\begin{align*}
\hat e_1^k(x,s) &=\frac{\hat h^{k-1}(s)\times \sinh 
 \left(\frac{(x+a)\sqrt{s^2-\lambda ^{-\tau s}}}{c }\right)}{ \sinh\left(\frac{a\sqrt{s^2-\lambda ^{-\tau s}}}{c} \right)},
\\
\\
\hat e_2^k(x,s) &=\frac{\hat h^{k-1}(s)\times \coth\left(\frac{a\sqrt{s^2-\lambda e^{-\tau s}}}{c} \right)\times \sinh \left(\frac{(b-x)\sqrt{s^2-\lambda e^{-\tau s}}}{c}\right)}{ \cosh\left(\frac{b\sqrt{s^2-\lambda e^{-\tau s}}}{c}\right)}.
\end{align*}
\\
By induction, we get the update condition as,
$$
\hat h^k(s)=\left(1-\theta-\theta \coth\left(\frac{a\sqrt{s^2-\lambda e^{-\tau s}}}{c}\right) \times \tanh \left(\frac{b\sqrt{s^2-\lambda e^{-\tau s}}}{c}\right)\right)^k\hat h^0(s).
$$
For symmetrical subdomains, i.e, $a=b$, we have
$$
\hat h^k(s)=(1-2\theta)^k \hat h^0(s),   
$$
Then the inverse Laplace transform yields,
$$
h^k(t)=(1-2\theta)^k  h^0(t).
$$
For $\theta=1/2$, the DNWR converges within two iterations. For other $\theta$ in (0, 1), it converges linearly. 
\end{proof}

\subsection{Convergence results of NNWR }

The NNWR algorithm for two subdomains $\Omega_1=(-a,0)$ and $\Omega_2=(0,b)$, given an initial guess $g^0$ along $\Gamma$, is as follows:\\
 Solve Dirichlet subproblems on subdomain $\Omega_i$, $i=1,2$:
\begin{equation}\label{eq_13}
\left\{\begin{array}{rl}
\partial_{tt} e_i^k-c ^2\partial _{xx}^2e_i-\lambda e_i(x, t-\tau )&=0, \ \ \ (x, t)\in \Omega_i\times (0, T), \\ 
 e_i^k(x, t)&=0, \ \ \ (x, t)\in \Omega_i\times[-\tau,0], \\ 
 \partial_te_i^k(x, t)&=0, \ \ \ (x, t)\in \Omega_i\times[-\tau,0], \\ 
e_i^k&=0, \ \ \  \textnormal{on }  \partial \Omega_i\backslash\Gamma,  \\ 
e_i^k&=g^{k-1},\ \ \  \textnormal{on } \Gamma, 
\end{array}\right.
\end{equation}

Then solve Neumann subproblems on subdomain $\Omega_i$,$i=1,2.$:
\begin{equation}\label{eq_14}
\left\{\begin{array}{rl}
\partial_{tt} \phi_i^k-c ^2\partial ^2_{xx}\phi_i^k-\lambda \phi_i^k(x, t-\tau )&=0,\ \ \  (x, t)\in \Omega_i\times (0, T),  \\ 
 \phi_i^k(x, t)&=0,\ \ \  (x, t)\in \Omega_i\times[-\tau,0], \\ 
 \partial_t\phi_i^k(x, t)&=0,\ \ \  (x, t)\in \Omega_i\times[-\tau,0], \\
 \partial _x\phi_i^k&=\partial _xe_1^k-\partial_xe_2^k,\ \ \   \textnormal{on }   \Gamma,\\
\phi_i^k&=0,\ \ \   \textnormal{on } \partial \Omega_i\backslash\Gamma.
\end{array}\right.
 \end{equation}

 The update condition along the interface is:\\
\begin{equation}\label{eq_15}
g^{k}(t)=g^{k-1}(t)-\theta( \phi_1^{k}\mid _{\Gamma\times(0, T)} +\phi_2^{k}\mid _{\Gamma\times(0, T)}).
\end{equation}\\
\begin{theorem}
For equal subdomains, the NNWR algorithm  (\ref{eq_13})-(\ref{eq_15}) converges linearly for $0<\theta<1/2$, $\theta\neq1/4$ and converges in two iterations for $\theta=1/4$, irrespective of the time window.
\label{thm:thmnnwr}
\end{theorem}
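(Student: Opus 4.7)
The plan is to mirror the Laplace-transform framework used in the NNWR analysis for the parabolic delay equation and in the hyperbolic DNWR proof immediately above. Applying the Laplace transform in $t$ turns the delay operator into multiplication by $e^{-\tau s}$, so each of the subdomain problems in (\ref{eq_13})--(\ref{eq_14}) reduces to the one-dimensional boundary value problem
\[
c^2 \partial_{xx} \hat u = (s^2 - \lambda e^{-\tau s})\hat u.
\]
Writing $\sigma := \sqrt{s^2 - \lambda e^{-\tau s}}/c$, the general solution on a bounded interval is a combination of $\sinh(\sigma x)$ and $\cosh(\sigma x)$, with coefficients fixed by the homogeneous outer Dirichlet conditions that $e_i^k$ and $\phi_i^k$ satisfy on $\partial\Omega_i\setminus\Gamma$.

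First I would solve the Dirichlet subproblems (\ref{eq_13}) explicitly, obtaining
\begin{align*}
\hat e_1^k(x,s) &= \hat g^{k-1}(s)\,\frac{\sinh(\sigma(x+a))}{\sinh(\sigma a)}, \\
\hat e_2^k(x,s) &= \hat g^{k-1}(s)\,\frac{\sinh(\sigma(b-x))}{\sinh(\sigma b)},
\end{align*}
and then compute the flux jump at $\Gamma$, which evaluates to $\partial_x\hat e_1^k(0,s) - \partial_x\hat e_2^k(0,s) = \hat g^{k-1}(s)\,\sigma\,(\coth(\sigma a) + \coth(\sigma b))$. This jump supplies the Neumann datum for each $\hat\phi_i^k$.

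Next, each $\hat\phi_i^k$ must be the multiple of the $\sinh$ that already satisfies the homogeneous outer Dirichlet condition, and its amplitude is pinned down by matching the normal derivative at $\Gamma$, with care for the opposite outward-normal sign on $\Omega_2$. A short calculation then yields $\hat\phi_1^k(0,s) + \hat\phi_2^k(0,s) = \hat g^{k-1}(s)\,(\coth(\sigma a)+\coth(\sigma b))(\tanh(\sigma a)+\tanh(\sigma b))$. Substituting into the update (\ref{eq_15}) produces the iteration symbol
\[
\hat g^k(s) = \bigl(1 - \theta(\coth(\sigma a)+\coth(\sigma b))(\tanh(\sigma a)+\tanh(\sigma b))\bigr)^k \hat g^0(s).
\]
Specialising to $a=b$, the identity $\coth(\sigma a)\tanh(\sigma a) = 1$ collapses the trigonometric factor to exactly $4$, independent of $s$, giving $\hat g^k(s) = (1-4\theta)^k\hat g^0(s)$. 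Inverting the Laplace transform produces $g^k(t) = (1-4\theta)^k g^0(t)$, from which both conclusions follow: $\theta = 1/4$ yields convergence in two iterations, and any other $\theta \in (0,1/2)$ yields linear convergence.

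The step I expect to be the main obstacle is the sign bookkeeping in the Neumann interface condition on $\Omega_2$, where the outward-normal derivative is $-\partial_x$; a careless reading of the flux condition there causes the two interface contributions to cancel and destroys all reduction. Once the signs are correctly reconciled, the symmetric-case simplification via $\coth\cdot\tanh = 1$ is the only non-routine algebraic step, and it produces the $1-4\theta$ factor in direct parallel with the parabolic NNWR analysis.
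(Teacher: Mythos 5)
Your proposal is correct and follows essentially the same route as the paper: Laplace transform, explicit $\sinh$ solutions of the Dirichlet subproblems, flux jump $\hat g^{k-1}\sigma(\coth(\sigma a)+\coth(\sigma b))$ feeding the Neumann solves, the iteration symbol $1-\theta(\coth(\sigma a)+\coth(\sigma b))(\tanh(\sigma a)+\tanh(\sigma b))$, and the collapse to $1-4\theta$ when $a=b$. Your bookkeeping is in fact slightly cleaner than the paper's displayed formulas, whose Neumann solutions carry $\sinh$ rather than $\cosh$ in the denominators (an apparent typo, since the paper's final update formula agrees with your $\tanh$ factors).
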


\begin{proof}
Solving the Dirichlet boundary value problem after applying the Laplace transform provides, 
\begin{align*}
\hat e_1^k(x,s) &=\frac{\hat g^{k-1}(s)\times \sinh \left(\frac{(x+a)\sqrt{s^2-\lambda e^{-\tau s}}}{c}\right)}{ \sinh \left(\frac{a\sqrt{s^2-\lambda e^{-\tau s}}}{c} \right)},
\\
\hat e_2^k(x,s) &=\frac{\hat g^{k-1}(s)\times \sinh \left(\frac{(b-x)\sqrt{s^2-\lambda e^{-\tau s}}}{c }\right)}{ \sinh\left(\frac{b\sqrt{s^2-\lambda e^{-\tau s}}}{c} \right)}.
\end{align*}
Solving the Neumann boundary problem yields,
\begin{align*}
\hat \phi_1^k(x,s) &=\frac{\hat g^{k-1}(s)\times \Psi(s)\times \sinh \left(\frac{(x+a)\sqrt{s^2-\lambda e^{-\tau s}}}{c }\right)}{ \sinh\left(\frac{a\sqrt{s^2-\lambda e^{-\tau s}}}{c}\right)},
\\
\hat \phi_2^k(x,s) &=\frac{\hat g^{k-1}(s)\times \Psi(s)\times \sinh \left(\frac{(b-x)\sqrt{s^2-\lambda e^{-\tau s}}}{c}\right)}{ \sinh\left(\frac{b\sqrt{s^2-\lambda e^{-\tau s}}}{c}\right)},
\end{align*}
where,

$$
 \Psi(s)=\left( \coth\left(\frac{a\sqrt{s^2-\lambda e^{-\tau s}}}{c}\right) + \coth\left(\frac{b\sqrt{s^2-\lambda e^{-\tau s}}}{c }\right)\right).
$$

By induction, the update step along the interface becomes,\\
$$
\hat g^k(s)=\hat g^{k-1}(s)\left [ 1-\theta\left(2+\frac{\tanh\left(a\frac{\sqrt{s^2-\lambda e^{-\tau s}}}{c }\right)}{\tanh\left(b\frac{\sqrt{s^2-\lambda e^{-\tau s}}}{c}\right)}+\frac{\tanh\left(b\frac{\sqrt{s^2-\lambda e^{-\tau s}}}{\upsilon }\right)}{\tanh\left(a\frac{\sqrt{s^2-\lambda e^{-\tau s}}}{c }\right)}\right) \right ].
$$
For symmetrical subdomains, i.e, $a=b$,\\
$$\hat g^k(s)=\left ( 1-4\theta \right )^k\hat g^{0}(s),$$\\
By inverse Laplace transform we have\\
$$ g^k(t)= \left ( 1-4\theta \right )^k g^{0}(t).$$\\
So, for $\theta=1/4$, we get convergence in 2 iterations. For other $\theta$ in $(0,1/2)$, convergence is linear.
\end{proof}

\section{Neutral PDE}
For the third model problem, we consider a Neutral PDE (this equation appears in \cite{vales}),
$$\frac{\partial u(x,t)}{\partial t}=\mu^2\frac{\partial ^2u(x,t)}{\partial x^2}+\mu^2c^2\frac{\partial ^2u(x,t-\tau)}{\partial x^2}+ru(x,t)+du(x,t-\tau)$$

with a history function and Dirichlet boundary conditions,\\ 
\begin{equation}  
\label{eq:neut}
\left\{\begin{array}{rl}
u(x,t) &=H(x,t),\textnormal{ for }  (x,t) \in  [-a,b]\times [-\tau,0],\\ 
u(0,t) &=u(\pi,t)=0, t\geq 0.
\end{array}\right.
\end{equation}

  \subsection{DNWR for Neutral PDE}
  We study the DNWR algorithm for the error equation associated with equation (\ref{eq:neut}) for two subdomains $\Omega_1=(-a,0)$ and $\Omega_2=(0,b)$.\\
Dirichlet Part for subdomain $\Omega_1$ :

\begin{equation}
    \label{eq_20}
   \left \{ \begin{array}{rl}
        \partial_t e_1^k-\mu^2\partial_{xx} e_1^k-\mu^2c^2\partial_{xx}e_1^k(x,t-\tau)-re_1^k(x, t)-de_1^k(x, t-\tau ) &=0,\quad\text{in}\quad \Omega_1\times (0, T),  \\ 
        e_1^k(x, t)&=0,\quad\text{in}\quad\Omega_1\times [-\tau, 0], \\ 
        e_1^k(-a, t)=0,\  \ e_1^k(0, t)&=h^{k-1}(t), \; \; t\in(0, T). \end{array} 
        \right.
\end{equation} 
Neumann Part for subdomain $\Omega_2$ :
\begin{equation}\label{eq_21}
\left\{\begin{array}{rl}
\partial_t e_2^k-\mu^2\partial_{xx} e_2^k-\mu
^2c^2\partial_{xx}e_2^k(x,t-\tau)-re_2^k(x, t)-de_2^k(x, t-\tau )&=0,\quad\text{in}\quad \Omega_2\times (0, T),  \\ 
  e_2^k(x, t)&=0, \quad\text{in}\quad \Omega_2\times [-\tau, 0], \\ 
  \partial_x e_2^k(0 , t)=\partial_x e_1^k(0 , t),\ \ e_2^k(b, t)&=0, \ \ \ \  t\in(0, T),
\end{array}
\right.
\end{equation}\\
The update condition along the interface is:\\
\begin{equation}\label{eq_22}
h^{k}(t)=\theta e_2^{k}\mid _{\Gamma\times(0, T)} +(1-\theta) h^{k-1}(t).
\end{equation}\\

\begin{theorem}
For equal subdomains, the DNWR algorithm for (\ref{eq:neut}) converges linearly for $0<\theta<1$, $\theta\neq 1/2$ and converges in two iterations for $\theta=1/2$, irrespective of the time window.
\label{thm:thmnnwr}
\end{theorem}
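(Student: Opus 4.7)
The plan is to mirror the Laplace-transform argument used for Theorems 3 and 4, adapting it to handle the two retarded terms (one zeroth-order and one second-order) that distinguish the neutral PDE from the hyperbolic and parabolic models. Since the history function is taken to be zero in the error equation, the Laplace transform of $u(x,t-\tau)$ is simply $e^{-\tau s}\hat u(x,s)$, and likewise $\partial_{xx}u(x,t-\tau)$ transforms to $e^{-\tau s}\partial_{xx}\hat u(x,s)$. Applying $\mathcal{L}$ to (\ref{eq_20})--(\ref{eq_22}) therefore turns each subdomain PDE into the second-order ODE
\begin{equation*}
\mu^{2}\bigl(1+c^{2}e^{-\tau s}\bigr)\,\partial_{xx}\hat e_i^{k}
= \bigl(s-r-d\,e^{-\tau s}\bigr)\hat e_i^{k},\qquad i=1,2.
\end{equation*}
Setting
$\displaystyle \sigma(s)=\sqrt{\frac{s-r-d\,e^{-\tau s}}{\mu^{2}\bigl(1+c^{2}e^{-\tau s}\bigr)}}$
recasts the transformed problem in exactly the form that appeared in the hyperbolic case, with $\sqrt{s^2-\lambda e^{-\tau s}}/c$ replaced by $\sigma(s)$.

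Next I would solve the transformed Dirichlet problem on $\Omega_1=(-a,0)$ and the transformed Neumann problem on $\Omega_2=(0,b)$ using the $\sinh$/$\cosh$ ansatz. Enforcing $\hat e_1^k(-a,s)=0$ and $\hat e_1^k(0,s)=\hat h^{k-1}(s)$ gives
\begin{equation*}
\hat e_1^{k}(x,s)=\hat h^{k-1}(s)\,\frac{\sinh\!\bigl(\sigma(s)(x+a)\bigr)}{\sinh\!\bigl(\sigma(s)\,a\bigr)},
\end{equation*}
and then matching $\partial_x\hat e_2^{k}(0,s)=\partial_x\hat e_1^{k}(0,s)$ together with $\hat e_2^{k}(b,s)=0$ yields
\begin{equation*}
\hat e_2^{k}(x,s)
=-\hat h^{k-1}(s)\,\coth\!\bigl(\sigma(s)\,a\bigr)\,
\frac{\sinh\!\bigl(\sigma(s)(b-x)\bigr)}{\cosh\!\bigl(\sigma(s)\,b\bigr)}.
\end{equation*}
Evaluating at $x=0$ gives the interface trace $\hat e_2^{k}(0,s)=-\hat h^{k-1}(s)\,\coth(\sigma a)\tanh(\sigma b)$, so the update step (\ref{eq_22}) becomes, by induction,
\begin{equation*}
\hat h^{k}(s)=\Bigl(1-\theta-\theta\,\coth\!\bigl(\sigma(s)\,a\bigr)\tanh\!\bigl(\sigma(s)\,b\bigr)\Bigr)^{k}\hat h^{0}(s).
\end{equation*}

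Finally, specializing to equal subdomains $a=b$ makes $\coth(\sigma a)\tanh(\sigma b)=1$, which collapses the symbol to $(1-2\theta)^{k}$ independently of $s$; since this constant factor survives the inverse Laplace transform unchanged, we obtain $h^{k}(t)=(1-2\theta)^{k}h^{0}(t)$, from which both the linear convergence for $0<\theta<1,\ \theta\ne 1/2$ and the finite two-step termination at $\theta=1/2$ follow exactly as in Theorem 3. The only genuine novelty compared to Theorems 3 and 4 is the form of $\sigma(s)$; once one convinces oneself that the $\sinh$/$\cosh$ solution formulas remain valid for the complex-valued $\sigma(s)$ arising here (which is automatic since we only need it as an inversion-ready symbol), the remainder of the argument is cosmetic. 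The main point requiring care, and the only place where I expect a small obstacle, is keeping the branch of $\sigma(s)$ and the signs in the Neumann matching consistent so that the cancellation $\coth(\sigma a)\tanh(\sigma a)=1$ at $a=b$ really does eliminate all the transcendental $s$-dependence.
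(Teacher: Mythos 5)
Your proposal is correct and follows essentially the same route as the paper: Laplace transform with zero history, the $\sinh/\cosh$ subdomain solutions with symbol $\sigma(s)=\frac{1}{\mu}\sqrt{\frac{s-r-d e^{-\tau s}}{1+c^{2}e^{-\tau s}}}$, the update factor $1-\theta-\theta\coth(\sigma a)\tanh(\sigma b)$ (identical to the paper's $1-\theta-\theta\,\coth(\sigma a)/\coth(\sigma b)$), and the collapse to $(1-2\theta)^{k}$ when $a=b$. No substantive differences.
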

\begin{proof}
On applying the Laplace transform to the subproblems (\ref{eq_20})-(\ref{eq_21}) and to the update condition (\ref{eq_22}), the error equation is transformed into the following form,
\[
\left\{\begin{array}{rl}
(s-r-de^{-\tau s})\hat e_1^k-\mu^2(1+c^2e^{-\tau s})\partial _{xx}\hat e_1^k&=0,\quad\text{in}\quad\Omega_1
\\ 
\hat e_1^k(-a,s)&=0 ,\\ 
\hat e_1^k(0,s)&=\hat h^{k-1}(s),\\
\end{array}\right.
\]
\\
\[
\left\{\begin{array}{rl}
(s-r-de^{-\tau s})\hat e_2^k-\mu^2(1+c^2e^{-\tau s})\partial _{xx}\hat e_2^k&=0, \quad\text{in}\quad\Omega_2\\ 
\hat e_2^k(b,s)&=0 ,\\ 
\partial_x\hat e_2^k(0,s)&=\partial_x\hat e_1^k(0,s),\\
\end{array}\right.
\]\\
followed by the update step:\\
$$\hat h^k(s)=\theta\hat e^k_2(0,s)+(1-\theta)\hat h^{k-1}(s).$$\\
On solving boundary value problems in the Dirichlet and Neumann step  
we get,
\begin{align*}
 \hat e_1^k &=\frac{\hat h^{k-1}(s)\sinh\left(\frac{1}{\mu} \sqrt{\frac{s-r-de^{-\tau s}}{1+c^2e^{-\tau s}}}(x+a)\right)}{\sinh\left(\frac{1}{\mu} \sqrt{\frac{s-r-de^{-\tau s}}{1+c^2e^{-\tau s}}}a\right)},\\
\\
\hat e_2^k &=-\frac{\hat h^{k-1}(s)\coth\left(\frac{1}{\mu} \sqrt{\frac{s-r-de^{-\tau s}}{1+c^2e^{-\tau s}}}a\right)\sinh\left(\frac{1}{\mu} \sqrt{\frac{s-r-de^{-\tau s}}{1+c^2e^{-\tau s}}}(b-x)\right)}{\cosh\left(\frac{1}{\mu} \sqrt{\frac{s-r-de^{-\tau s}}{1+c^2e^{-\tau s}}}b\right)}.
\end{align*}
\\
By induction, we get the update condition as,
\begin{align*}
\hat h^k(s) &=\left(1-\theta-\theta \frac{\coth\left(\frac{1}{\mu} \sqrt{\frac{s-r-de^{-\tau s}}{1+c^2e^{-\tau s}}}a\right)}{\coth\left(\frac{1}{\mu} \sqrt{\frac{s-r-de^{-\tau s}}{1+c^2e^{-\tau s}}}b\right)}\right)^k\hat h^0(s).
\end{align*}
For symmetrical domain, $a=b$, 
$$
\hat h^k(s)=(1-2\theta)^k \hat h^0(s).  
$$
Then the inverse Laplace transform yields,
$$
h^k(t)=(1-2\theta)^k  h^0(t).
$$
So, for $\theta=1/2$, the method converges in two iterations. For other $\theta$ in (0, 1), convergence is linear.
\end{proof}
\subsection{NNWR for Neutral PDE}
The NNWR algorithm for 2 subdomains $\Omega_1=(-a,0)$ and
 $\Omega_2=(0,b)$ is as follows:\\
Dirichlet subproblem on subdomains $\Omega_i$,
 \\
\begin{equation}\label{eq_23}
\left\{\begin{array}{rl}
\partial_t e_i^k-\mu^2\partial_{xx} e_i^k-\mu^2c^2\partial_{xx}e_i^k(x,t-\tau)-re_i^k(x, t)&=de_i^k(x, t-\tau ), \quad\text{in}\quad\Omega_i\times (0, T),  \\ 
  e_i^k(x, t)&=0, \quad\text{in}\quad \Omega_i\times [-\tau, 0], \\ 
e_i^k(x, t)&=0, \ \ \ t\in(0, T), x\in \partial \Omega_i \setminus \Gamma,\\ 
e_i^k(0, t)&=g^{k-1}(t), \ \ \  t\in(0, T).
\end{array}\right.
\end{equation}\\
Neumann subproblem on subdomains $\Omega_i$,
\begin{equation}\label{eq_24}
\left\{\begin{array}{rl}
\partial_t \phi_i^k-\mu^2\partial_{xx} \phi_i^k-\mu^2c^2\partial_{xx}\phi_i^k(x,t-\tau)-r\phi_i^k(x, t)&=d\phi_i^k(x, t-\tau ), \quad\text{in}\quad \Omega_i\times (0, T), \\ 
  \phi_i^k(x, t)&=0, \quad\text{in}\quad \Omega_i\times [-\tau, 0], \\ 
\phi_i^k(x, t)&=0,  \  t\in(0, T), x\in (\partial \Omega_i \setminus \Gamma), \\
\partial_x \phi_i^k(\Gamma , t)&=\partial_x e_1^k-\partial_x e_1^k, \  t\in (0, T).
\end{array} \right.
\end{equation}\\
The update condition along the interface is:\\
\begin{equation}\label{eq_25}
g^{k}(t)=g^{k-1}(t)-\theta( \phi_1^{k}\mid _{\Gamma\times(0, T)} +\phi_2^{k}\mid _{\Gamma\times(0, T)}).
\end{equation}\\
\begin{theorem}
For equal subdomains, the NNWR algorithm for (\ref{eq:neut}) converges linearly for $0<\theta<1/2$, $\theta\neq1/4$ and converges in two iterations for $\theta=1/4$, irrespective of the time window.
\label{thm:thmnnwr}
\end{theorem}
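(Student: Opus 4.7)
The plan is to follow the same Laplace-transform strategy used in Theorems 2 and 4, adapted to the neutral-PDE symbol that already appeared in Theorem 5. First, I would apply the Laplace transform in $t$ to the Dirichlet subproblems (\ref{eq_23}), the Neumann subproblems (\ref{eq_24}), and the update rule (\ref{eq_25}). Because the history and initial data vanish on $[-\tau,0]$, each delayed term $w(x,t-\tau)$ becomes $e^{-\tau s}\hat w(x,s)$, and every subproblem collapses to a constant-coefficient ODE in $x$ of the form
\[
\mu^{2}\bigl(1+c^{2}e^{-\tau s}\bigr)\partial_{xx}\hat w=\bigl(s-r-de^{-\tau s}\bigr)\hat w,
\]
with characteristic exponent
\[
\beta(s)=\frac{1}{\mu}\sqrt{\frac{s-r-de^{-\tau s}}{1+c^{2}e^{-\tau s}}},
\]
exactly the symbol used in the DNWR proof for the neutral case.

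Next, I would solve the two Dirichlet subproblems with boundary values zero on $\partial\Omega_i\setminus\Gamma$ and $\hat g^{k-1}(s)$ on $\Gamma$, obtaining sinh-type formulae analogous to those in Theorem 4:
\[
\hat e_1^k(x,s)=\hat g^{k-1}(s)\frac{\sinh(\beta(s)(x+a))}{\sinh(\beta(s)a)},\qquad \hat e_2^k(x,s)=\hat g^{k-1}(s)\frac{\sinh(\beta(s)(b-x))}{\sinh(\beta(s)b)}.
\]
Evaluating the jump $\partial_x\hat e_1^k(0,s)-\partial_x\hat e_2^k(0,s)$ then furnishes the Neumann data for the correction stage, and solving those Neumann subproblems yields $\hat\phi_i^k$ proportional to $\hat g^{k-1}(s)\,\Psi(s)$ with the familiar factor $\Psi(s)=\coth(\beta(s)a)+\coth(\beta(s)b)$, identical in shape to the hyperbolic case.

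Substituting into the update rule and iterating produces
\[
\hat g^k(s)=\left[1-\theta\left(2+\frac{\tanh(\beta(s)a)}{\tanh(\beta(s)b)}+\frac{\tanh(\beta(s)b)}{\tanh(\beta(s)a)}\right)\right]^{k}\hat g^{0}(s),
\]
which for $a=b$ collapses to $\hat g^k(s)=(1-4\theta)^k\hat g^0(s)$. Inverting the Laplace transform then gives $g^k(t)=(1-4\theta)^k g^0(t)$, yielding linear convergence for $\theta\in(0,1/2)\setminus\{1/4\}$ and termination in two iterations at $\theta=1/4$.

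The main obstacle I anticipate is analytic rather than algebraic: to justify the sinh/cosh representations and the inversion back to the time domain, I need to fix a branch of $\beta(s)$ with $\mathrm{Re}\,\beta(s)>0$ for $\mathrm{Re}\,s$ sufficiently large, which requires checking that the denominator $1+c^{2}e^{-\tau s}$ is bounded away from zero in the relevant half-plane and that the quotient under the root avoids the negative real axis. This is the same delicate point already implicit in the DNWR proof for the neutral PDE; once it is handled, the telescoping of the update map is independent of the specific form of $\beta(s)$ and the argument closes exactly as in the parabolic and hyperbolic NNWR cases.
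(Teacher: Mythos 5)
Your proposal is correct and follows essentially the same route as the paper's proof: Laplace transform of (\ref{eq_23})--(\ref{eq_25}), sinh-type Dirichlet solutions with the symbol $\beta(s)=\frac{1}{\mu}\sqrt{\tfrac{s-r-de^{-\tau s}}{1+c^{2}e^{-\tau s}}}$, Neumann corrections proportional to $\coth(\beta a)+\coth(\beta b)$, and the update factor collapsing to $(1-4\theta)$ when $a=b$ (your tanh-ratio form of the contraction factor is algebraically identical to the paper's coth-ratio form). Your closing remark on fixing a branch of $\beta(s)$ with positive real part is a point the paper leaves implicit, but it does not change the argument.
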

\begin{proof}
After applying Laplace transform to the equations (\ref{eq_23})-(\ref{eq_25}), the transformed equations become:
    \[
\left\{\begin{array}{rl}
(s-r-de^{-\tau s})\hat e_1^k-\mu^2(1+c^2e^{-\tau s})\partial _{xx}\hat e_1^k&=0,\quad\text{in}\quad\Omega_1
\\ 
\hat e_1^k(-a,s)&=0 ,\\ 
\hat e_1^k(0,s)&=\hat g^{k-1}(s),\\
\end{array}\right.
\]
\[
\left\{\begin{array}{rl}
(s-r-de^{-\tau s})\hat e_2^k-\mu^2(1+c^2e^{-\tau s})\partial _{xx}\hat e_2^k&=0, \quad\text{in}\quad\Omega_2\\ 
\hat e_2^k(b,s)&=0 ,\\ 
\hat e_1^k(0,s)&=\hat g^{k-1}(s),\\
\end{array}\right.
\]
\\
\[
\left\{\begin{array}{rl}
(s-r-de^{-\tau s})\hat \phi_1^k-\mu^2(1+c^2e^{-\tau s})\partial _{xx}\hat \phi_1^k&=0,\quad\text{in}\quad\Omega_1
\\ 
\hat \phi_1^k(-a,s)&=0 ,\\ 
\partial _x\hat \phi_1^k&=\partial _x\hat e_1^k-\partial_x\hat e_2^k,\\
\end{array}\right.
\]
\\
\[
\left\{\begin{array}{rl}
(s-r-de^{-\tau s})\hat \phi_2^k-\mu^2(1+c^2e^{-\tau s})\partial _{xx}\hat \phi_2^k&=0, \quad\text{in}\quad\Omega_2\\ 
\hat \phi_2^k(b,s)&=0 ,\\ 
-\partial _x\hat \phi_2^k&=\partial _x \hat e_1^k-\partial_x \hat 
e_2^k,\\
\end{array}\right.
\]\\
Solving the Dirichlet boundary value problems gives, \\
\begin{align*}
\hat e_1^k &=\frac{\hat g^{k-1}(s)\sinh\left(\frac{1}{\mu} \sqrt{\frac{s-r-de^{-\tau s}}{1+c^2e^{-\tau s}}}(x+a)\right)}{\sinh\left(\frac{1}{\mu} \sqrt{\frac{s-r-de^{-\tau s}}{1+c^2e^{-\tau s}}}a\right)},
\\
\\
\hat e_2^k &=\frac{\hat g^{k-1}(s)\sinh\left(\frac{1}{\mu} \sqrt{\frac{s-r-de^{-\tau s}}{1+c^2e^{-\tau s}}}(b-x)\right)}{\sinh\left(\frac{1}{\mu} \sqrt{\frac{s-r-de^{-\tau s}}{1+c^2e^{-\tau s}}}b\right)}.
\end{align*}
\\
The solutions to the Neumann boundary problems are: 

\begin{align*}
\hat \phi_1^k(x,s) &=\frac{\hat g^{k-1}(s)\xi(s)\sinh\left(\frac{1}{\mu} \sqrt{\frac{s-r-de^{-\tau s}}{1+c^2e^{-\tau s}}}\right)(x+a))}{\cosh\left(\frac{1}{\mu} \sqrt{\frac{s-r-de^{-\tau s}}{1+c^2e^{-\tau s}}}a\right)},
\\
\\
\hat \phi_2^k(x,s) &=\frac{\hat g^{k-1}(s)\xi(s)\sinh\left(\frac{1}{\mu} \sqrt{\frac{s-r-de^{-\tau s}}{1+c^2e^{-\tau s}}}(b-x)\right)}{\cosh\left(\frac{1}{\mu} \sqrt{\frac{s-r-de^{-\tau s}}{1+c^2e^{-\tau s}}}b\right)},
\end{align*}

where,
\\
$$
\xi (s)=\left (\coth\left ( \frac{1}{\mu} \sqrt{\frac{s-r-de^{-\tau s}}{1+c^2e^{-\tau s}}} a\right )+\coth\left(\frac{1}{\mu} \sqrt{\frac{s-r-de^{-\tau s}}{1+c^2e^{-\tau s}}}b\right)\right ).
$$\\
\\
By induction, the relation for the update step along the interface where $\theta$
being a relaxation parameter is:\\
$$
\hat g^k=\hat g^{k-1}\left [ 1-\theta \left ( 2+\frac{\coth\left(\frac{1}{\mu} \sqrt{\frac{s-r-de^{-\tau s}}{1+c^2e^{-\tau s}}}a\right)}{\coth\left(\frac{1}{\mu} \sqrt{\frac{s-r-de^{-\tau s}}{1+c^2e^{-\tau s}}}b\right)} +\frac{\coth\left(\frac{1}{\mu} \sqrt{\frac{s-r-de^{-\tau s}}{1+c^2e^{-\tau s}}}b\right)}{\coth\left(\frac{1}{\mu} \sqrt{\frac{s-r-de^{-\tau s}}{1+c^2e^{-\tau s}}}a\right)}\right ) \right ].
$$
For symmetrical domains, $a=b$,\\
$$\hat g^k(s)=\left ( 1-4\theta \right )^k\hat g^{0}(s),$$\\
Using inverse Laplace transform, we have\\
$$ g^k(t)= \left ( 1-4\theta \right )^k g^{0}(t)
.$$\\
So, for $\theta=1/4$, we get convergence in 2 iterations. For other $\theta$ in $(0,1/2)$, convergence is linear.
\end{proof}

\section{Numerical Illustrations}\label{sec2}
In this section, we conduct numerical experiments to assess the effectiveness of DNWR and NNWR for all model problems discussed above (\ref{eq_1}), (\ref{eq_9}), (\ref{eq:neut}).
We work on an error equation so the boundary conditions and history function become zero. 
For a symmetrical case, we divide the domain into two equal subdomains, and the initial transmission condition chosen on the interface boundary is $h(t)=t^2$.
For Reaction-Diffusion equation with time delay, we compare DNWR and NNWR with Classical Schwarz and Optimized Schwarz methods \cite{shulinquasi}. Also, to empirically verify the effectiveness of DNWR and NNWR in the presence of multiple subdomains, we perform the experiment with multiple subdomains for the Reaction-Diffusion equation with time delay. 

\subsection{Reaction-Diffusion Equation with Time Delay}
For the numerical implementation, we take the spatial domain as $\Omega=(0,6)$ and the time window $T=6$. We use a backward Euler in time with $\Delta t=0.1$ and the central finite difference in space with $\Delta x= 0.1 $.

\begin{figure}[!h]
    \centering
    \subfloat{\includegraphics[width=0.462\linewidth]{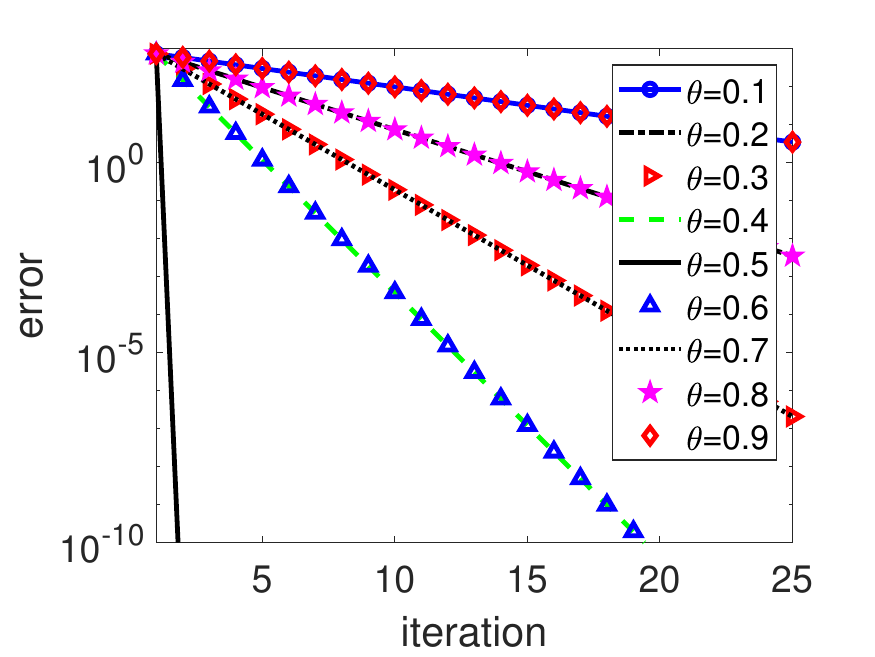} }%
    \qquad
    \subfloat{\includegraphics[width=0.462\linewidth]{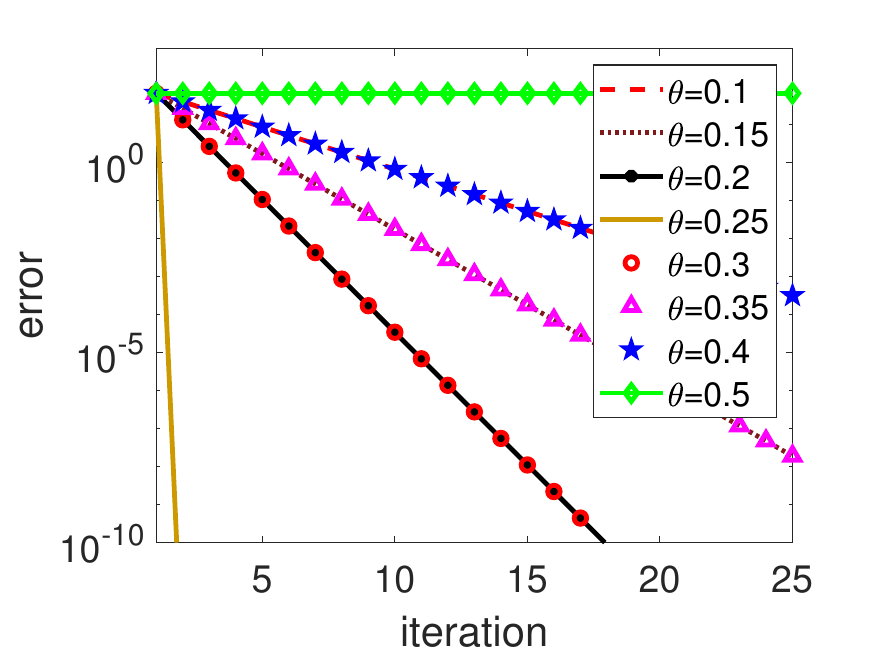} }%
    \caption{Parabolic PDE with time delay when $a_1 \neq 0$: Convergence for different relaxation parameters; Left: DNWR algorithm, Right: NNWR algorithm. }
    \label{dnwrnnwreq1}
    \end{figure}

\subsubsection*{Case 1: $a_1\neq0$ in Equation (\ref{eq_1})}

For this experiment, the value of coefficients in Equation (\ref{eq_1}) are taken as $a_1=1$, $a_2=2.3$, $\nu=1$ and $\tau=1.5$ as used in \cite{shulin}. We first run the experiments for symmetrical domains for both DNWR and NNWR; the result is reported in Fig.\ref{dnwrnnwreq1}. We get convergence in two iterations for $\theta=1/2$ in DNWR, and for other $\theta $ lying between 0 to 1, convergence is linear, which is similar to the result obtained in Theorem 1. 
 In NNWR for $\theta=1/4$, we get convergence in 2 iterations, and for other $\theta $ lying between 0 to 1/2, we see linear convergence, which is what we obtained in Theorem 2. For asymmetrical domains, we consider the subdomains $\Omega_1=(0,4)$ and subdomain $\Omega_2=(4,6)$ and plot the convergence results in Fig \ref{dnwrnnwrneq1}.

 \begin{figure}[!h]
    \centering
    \subfloat{{\includegraphics[width=0.462\linewidth]{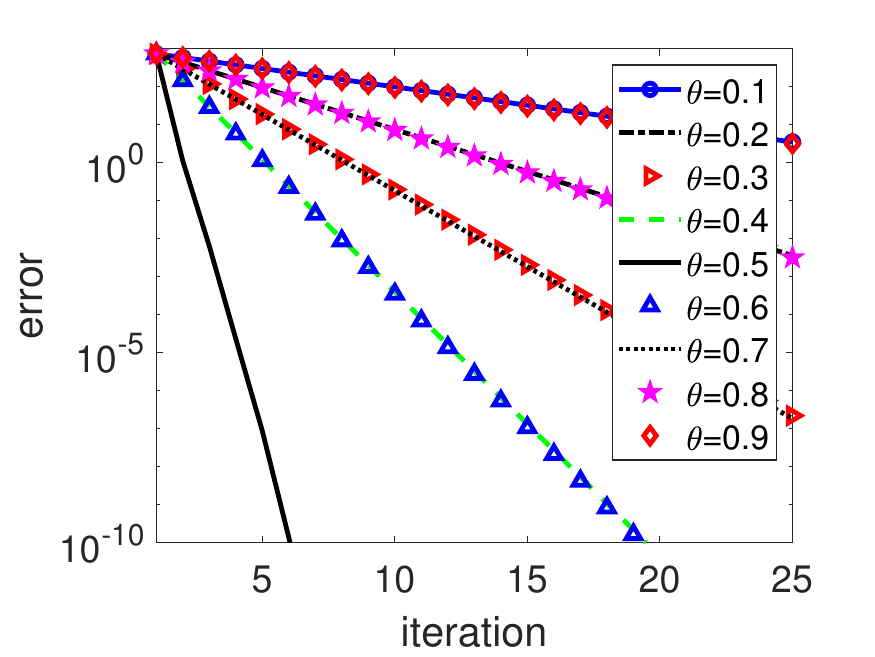} }}%
    \qquad
    \subfloat{{\includegraphics[width=0.462\linewidth]{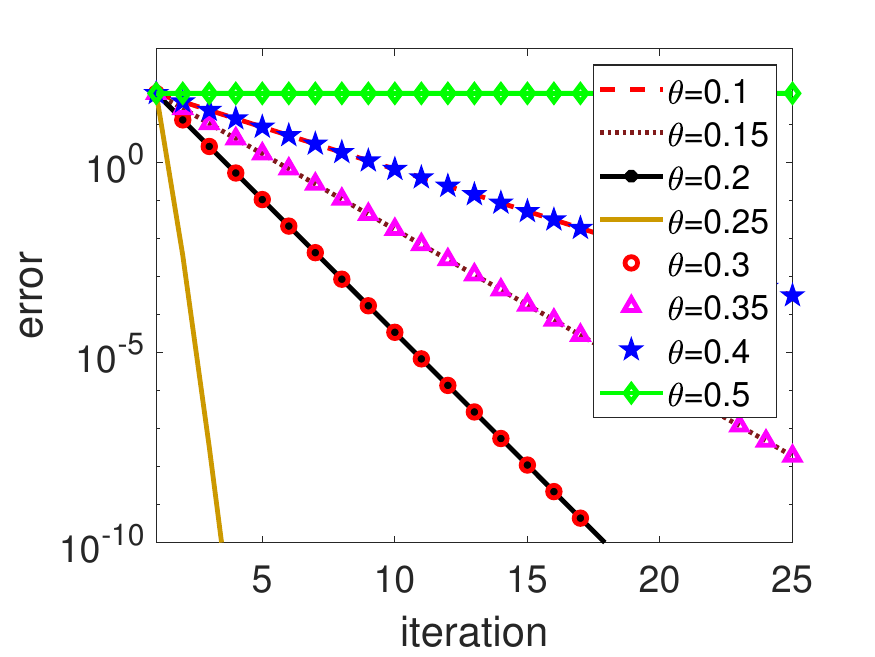} }}%
    \caption{Parabolic PDE with time delay when $a_1 \neq 0$: Left; Convergence of DNWR for different parameters for larger Dirichlet domain; Right: Convergence of NNWR for different parameters. }
    \label{dnwrnnwrneq1}
    \end{figure}

      We then compare DNWR and NNWR for asymmetrical case with Classical Schwarz and Optimized Schwarz methods having overlapping as $2\Delta x$  with two different optimal parameters $p=p^*_{old}$ and $p=p^*_{new}$ where $p^*_{old}=4.21342663346202$ and $p^*_{new}=1.7141023669459$  (as derived by Shu-lin et al. in \cite{shulin}) and the result obtained is presented in
Fig. \ref{comppare1_1}. It shows that both DNWR and NNWR are better in terms of iteration efficiency as compared to Classical Schwarz and Optimised Schwarz methods for optimal choice of the parameter $\theta$.

 \begin{figure}[!h]
    \centering
    \includegraphics[width=0.492 \linewidth]{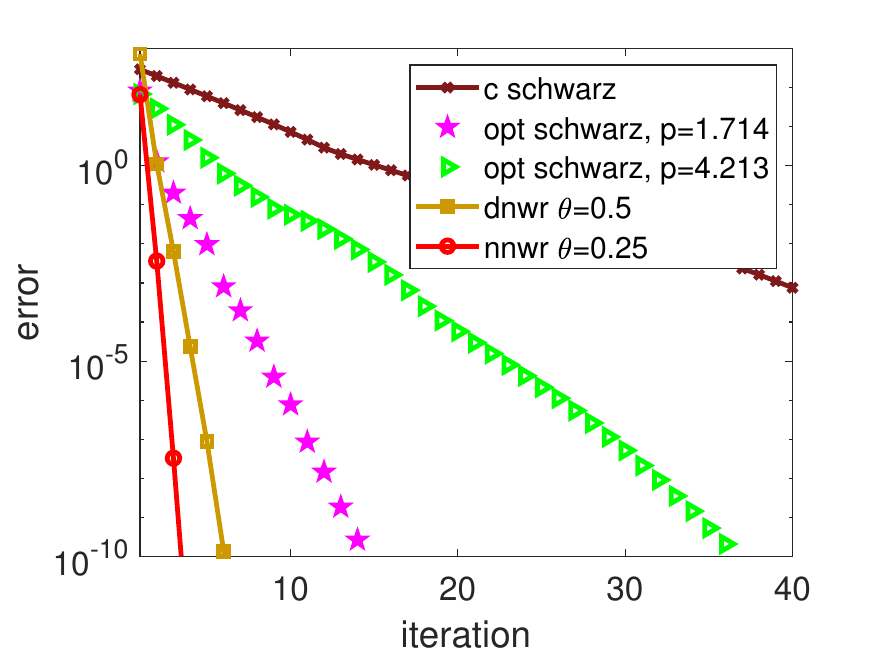}
    \caption{ Parabolic PDE with time delay when $a_1\neq0$; Comparison of NNWR and DNWR  with classical Schwarz and Optimised Schwarz method.} 
     \label{comppare1_1}
    \end{figure}

\subsubsection*{Case 2: $a_1=0$ in Equation (\ref{eq_1})}
In this case, for the numerical experiments, we take values $a_2=0.028$, $\nu =1$ and $\tau=3$ in problem \eqref{eq_1} as used in \cite{shulin}. We first examine the equal subdomain case for both DNWR and NNWR, and the outcome is presented in Fig. \ref{dnwrnnwr_eq}. For symmetrical domains, we get convergence in just two iterations for $\theta=1/2$ in DNWR 
and for $\theta=1/4$ in NNWR.
 \begin{figure}[!h]
    \centering
    \subfloat{{\includegraphics[width=0.462\linewidth]{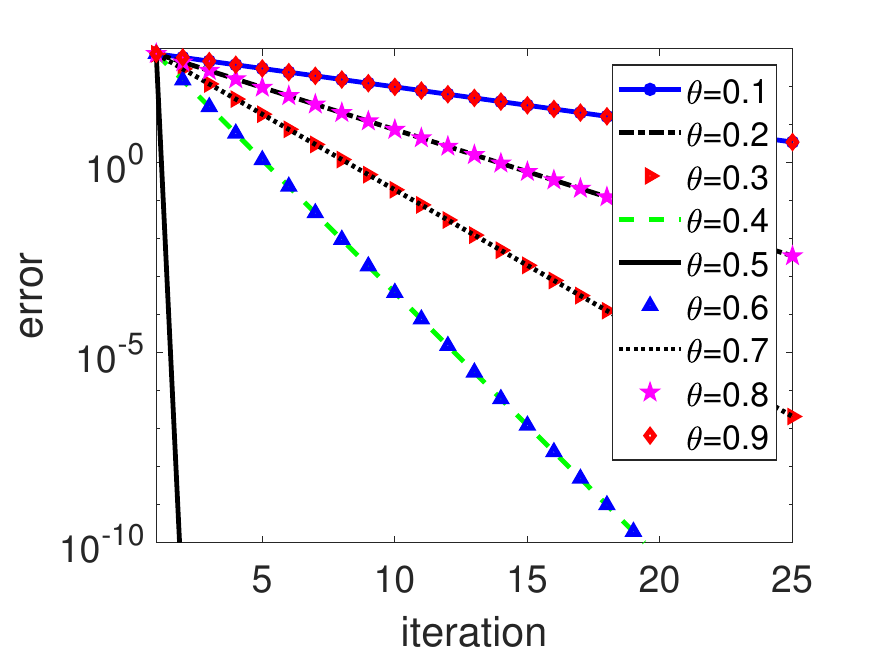} }}%
    \qquad
    \subfloat{{\includegraphics[width=0.462\linewidth]{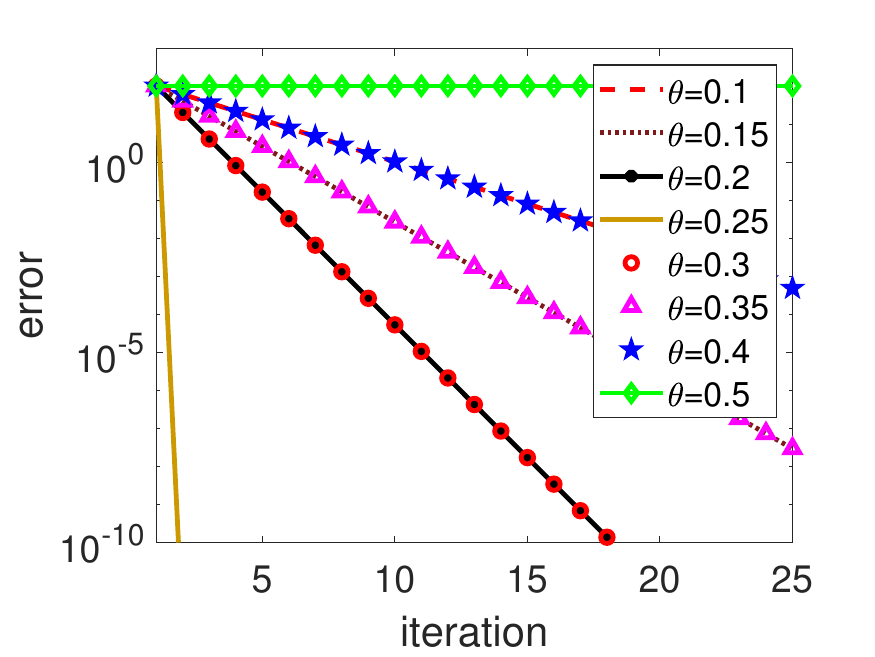} }}%
    \caption{Parabolic PDE with time delay when $a_1=0$; Left: Convergence of DNWR for different $\theta$; Right: Convergence of NNWR for different $\theta$. }
    \label{dnwrnnwr_eq}
    \end{figure}
    
 We plot the convergence curves for asymmetrical domains in Fig. \ref{dnwrnnwr_neq}, and we find superlinear convergence for $\theta=1/2$ for DNWR and $\theta=1/4$ for NNWR. 
\begin{figure}[!h]
    \centering
    \subfloat{{\includegraphics[width=0.462\linewidth]{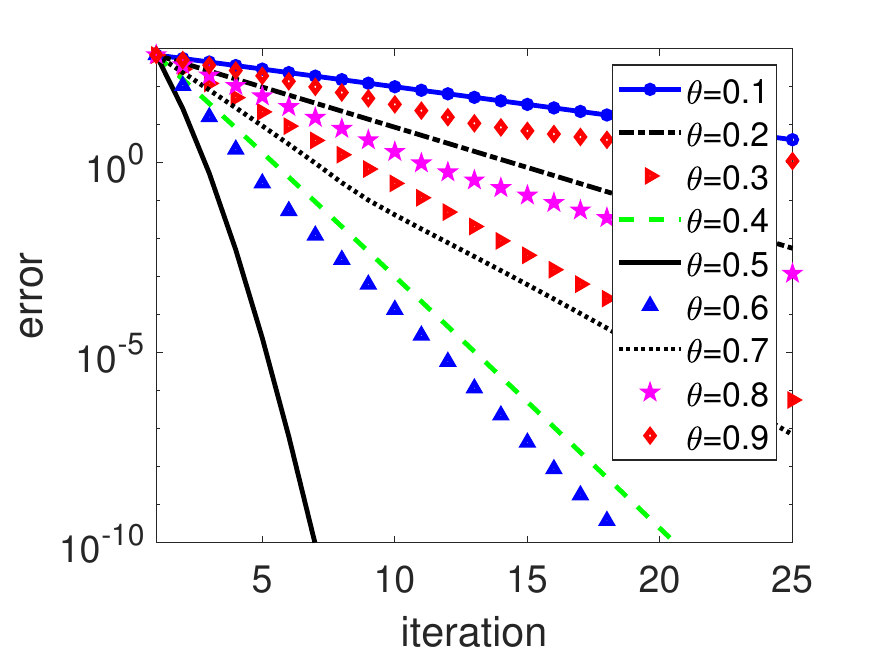} }}%
    \qquad
    \subfloat{{\includegraphics[width=0.462\linewidth]{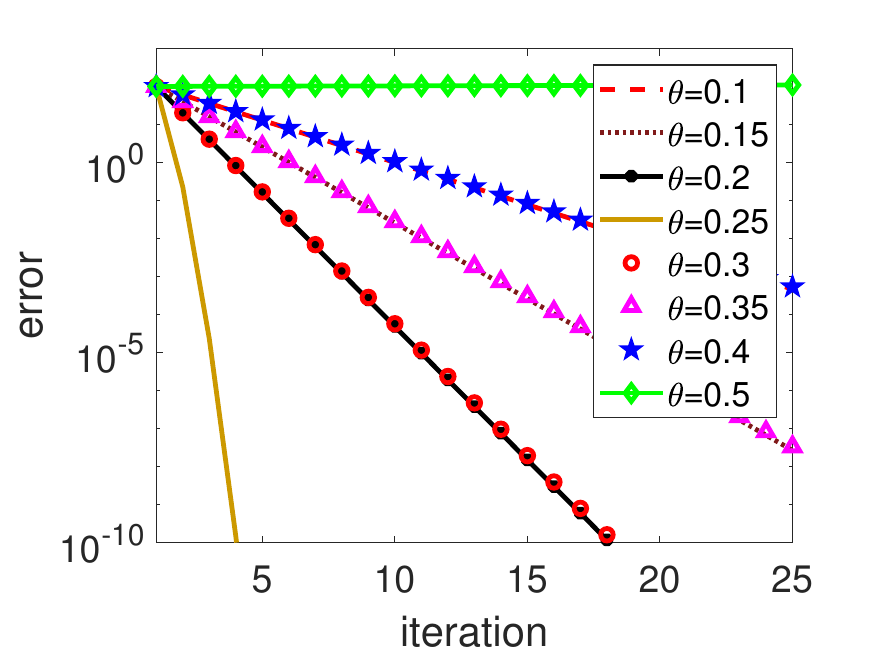} }}%
    \caption{Parabolic PDE with time delay when $a_1=0$; Left: Convergence of DNWR for larger  Dirichlet domain, Right: Convergence of NNWR. }
    \label{dnwrnnwr_neq}
    \end{figure}
    Comparing DNWR and NNWR with the Classical Schwarz and Optimised Schwarz approaches, Fig. \ref{comp} illustrates which approach takes fewer iterations for convergence.

     \begin{figure}[!h]
    \centering
    \includegraphics[width=0.45 \linewidth]{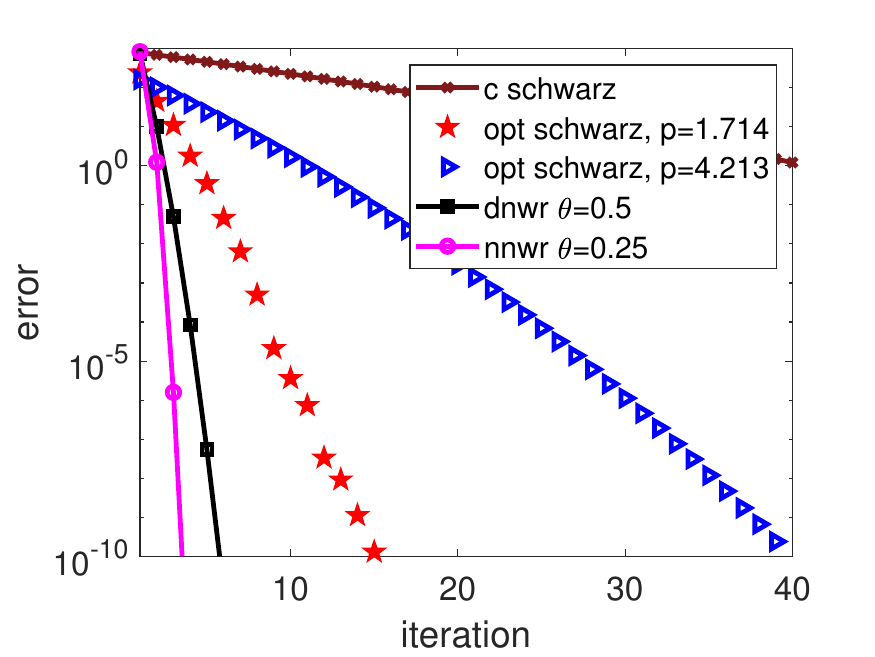}
    \caption{Parabolic PDE with time delay when $a_1=0$: Comparison of NNWR and DNWR  with classical Schwarz and Optimised Schwarz method.} 
     \label{comp}
    \end{figure}

    \subsection{Wave equation with Time Delay}
  For numerical implementation, we use an implicit finite difference scheme, the central difference in space and time with $\Delta x=0.1$ and $\Delta t=0.1$. We take the spatial domain $\Omega=(0,6)$ and time window $T=6$, delay $\tau$ is $3$. See Fig. \ref{dnwrnnwr_eq_wave} for the convergence of DNWR and NNWR for the wave equation when subdomain sizes are equal.
 \begin{figure}[!h]
    \centering
    \subfloat{{\includegraphics[width=0.462\linewidth]{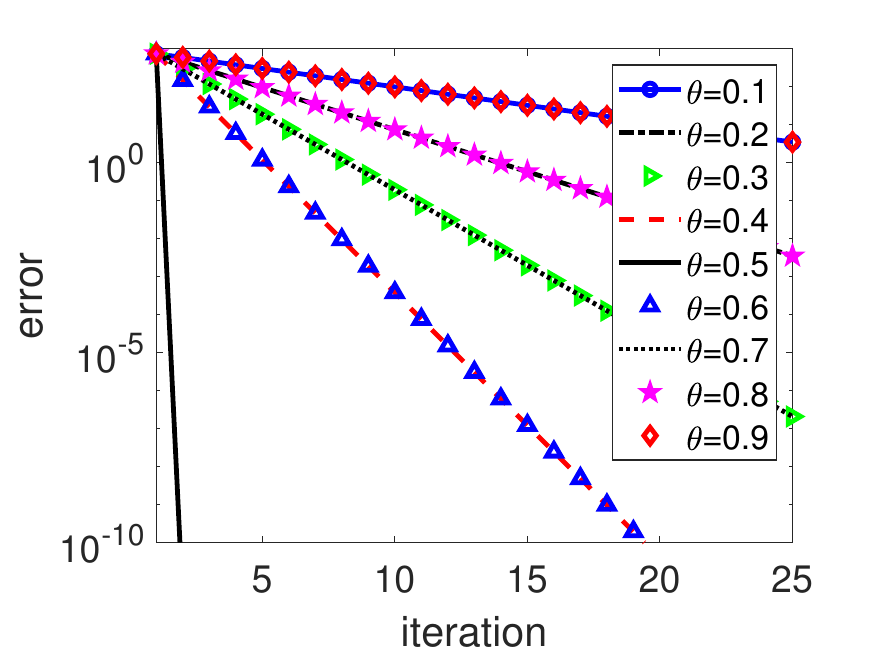} }}%
    \qquad
    \subfloat{{\includegraphics[width=0.462\linewidth]{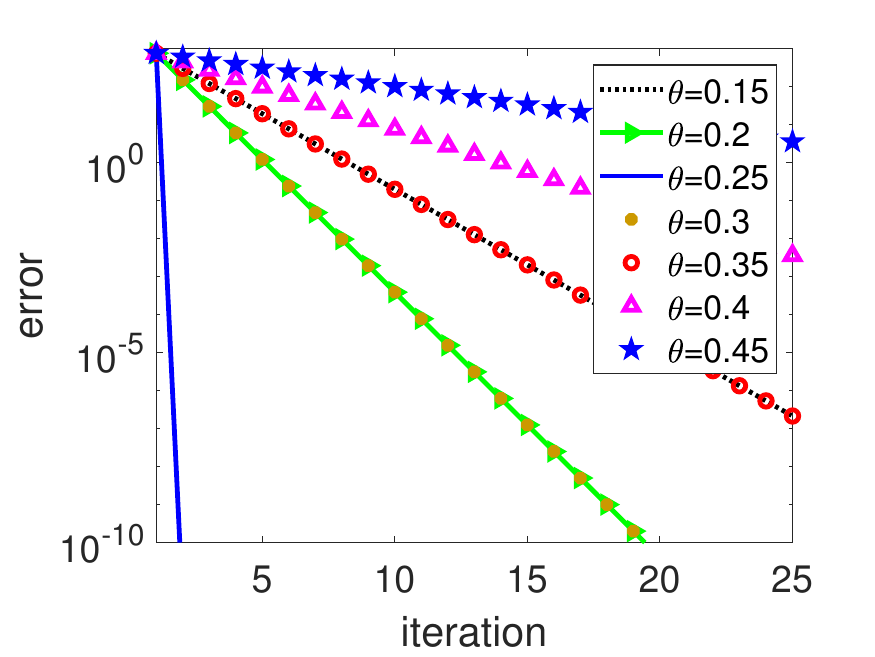} }}%
    \caption{Wave PDE with time delay: Left: Convergence of DNWR for different $\theta$, Right: Convergence of NNWR for different $\theta$. }
    \label{dnwrnnwr_eq_wave}
    \end{figure}

 Fig. \ref{dnwrnnwr_neq_wave} illustrates how DNWR method performs for unequal subdomains. For $\theta=1/2$, we obtain superlinear convergence. We also obtain superlinear convergence for $\theta=1/4$ for NNWR method with asymmetrical domains.
 
 \begin{figure}[!h]
    \centering
    \subfloat{{\includegraphics[width=0.462\linewidth]{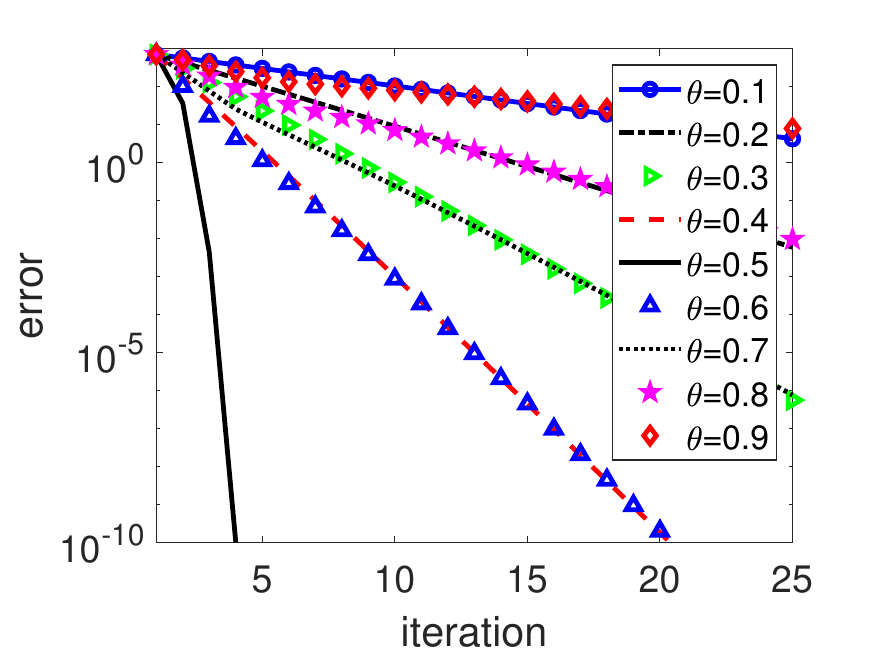} }}
    \qquad
    \subfloat{{\includegraphics[width=0.462\linewidth]{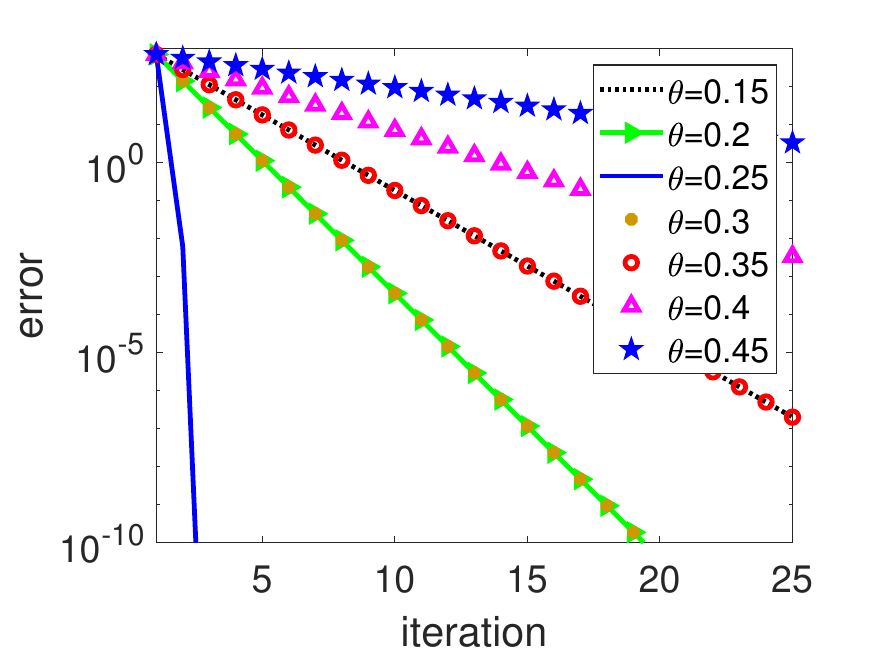} }}
    \caption{Wave PDE with time delay: Left: Convergence of DNWR for larger Dirichlet domain, Right: Convergence of NNWR for larger first domain. }
    \label{dnwrnnwr_neq_wave}
    \end{figure}
    
\subsection{Neutral PDE}

To check the effectiveness of DNWR and NNWR for Equation  (\ref{eq:neut}), we run experiments for the corresponding error equation with homogeneous boundary conditions and zero history function. As per \cite{vales}, we take $\mu =1$, $c=0.1$, $r=0.05$ and $d=\frac{cr}{2}$. We determine the solution for the time interval [0,5] using the value of $\tau = 1$, $\Delta x=0.1$ and $\Delta t=0.1$. 
Convergence of DNWR algorithm for symmetrical domains is obtained in two iterations for $\theta=1/2$  and for other $\theta\in(0,1)$  we get linear convergence. For NNWR $\theta=1/4$ act as the optimal parameter 
(see Fig. \ref{dnwrnnwr_eq_neutral}).  
We also test the methods for asymmetrical division of spatial domain. For a Dirichlet domain $(\Omega_1 = (0, 4.5))$  larger than the Neumann domain $\Omega_2 = (4.5, 6)$, with $\theta=1/2$, the DNWR algorithm exhibits superlinear convergence; see Fig.\ref{dnwrnnwr_neq_neutral}. The same is true for NNWR with $\theta=1/4$. 
\begin{figure}[!h]
    \centering
    \subfloat{{\includegraphics[width=0.462\linewidth]{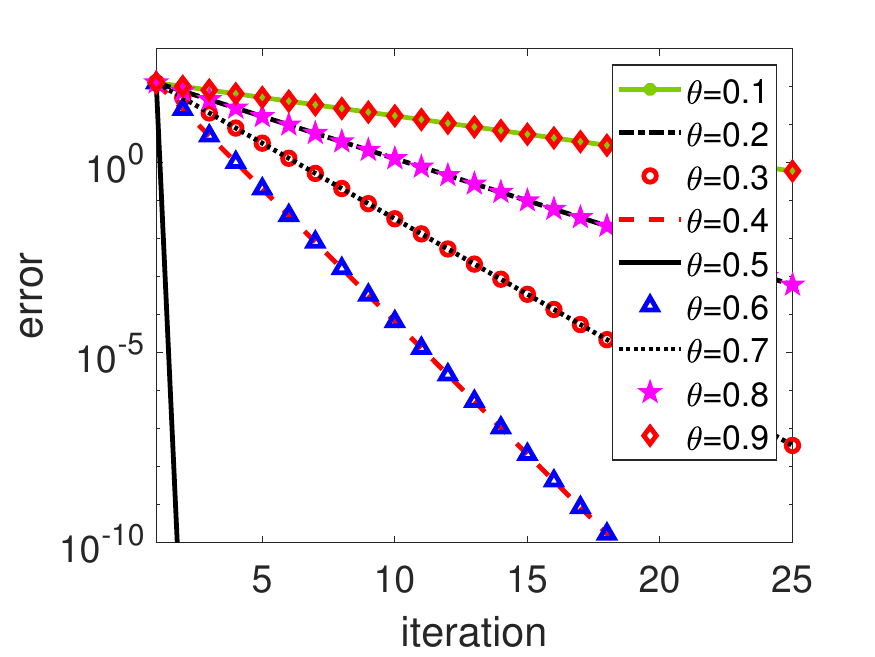} }}%
    \qquad
    \subfloat{{\includegraphics[width=0.462\linewidth]{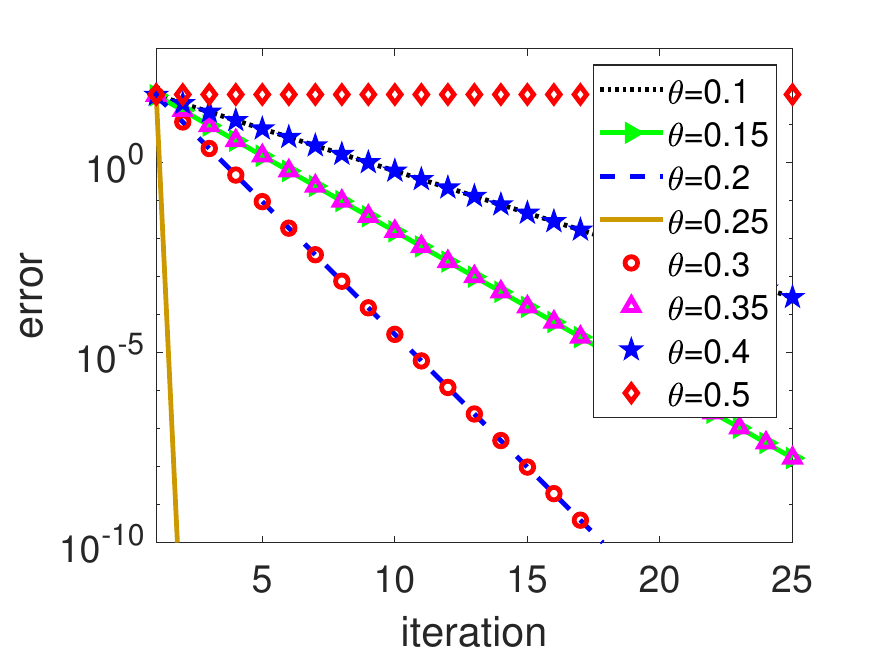} }}%
    \caption{Neutral PDE: Left: Convergence of DNWR for different $\theta$, Right: Convergence of NNWR for different $\theta$. }
    \label{dnwrnnwr_eq_neutral}
    \end{figure}
  
 \begin{figure}[!h]
    \centering
    \subfloat{{\includegraphics[width=0.462\linewidth]{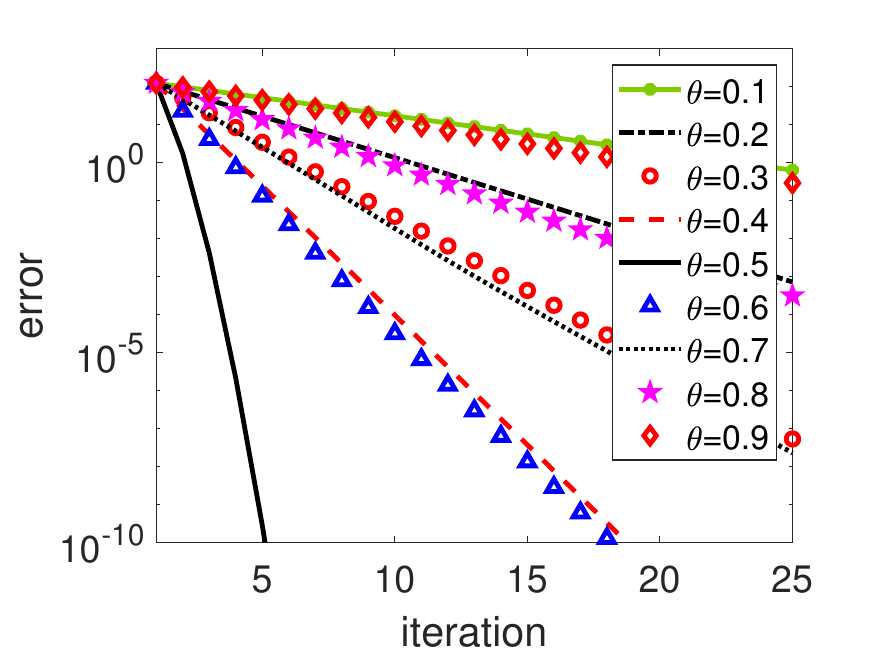} }}
    \qquad
    \subfloat{{\includegraphics[width=0.462\linewidth]{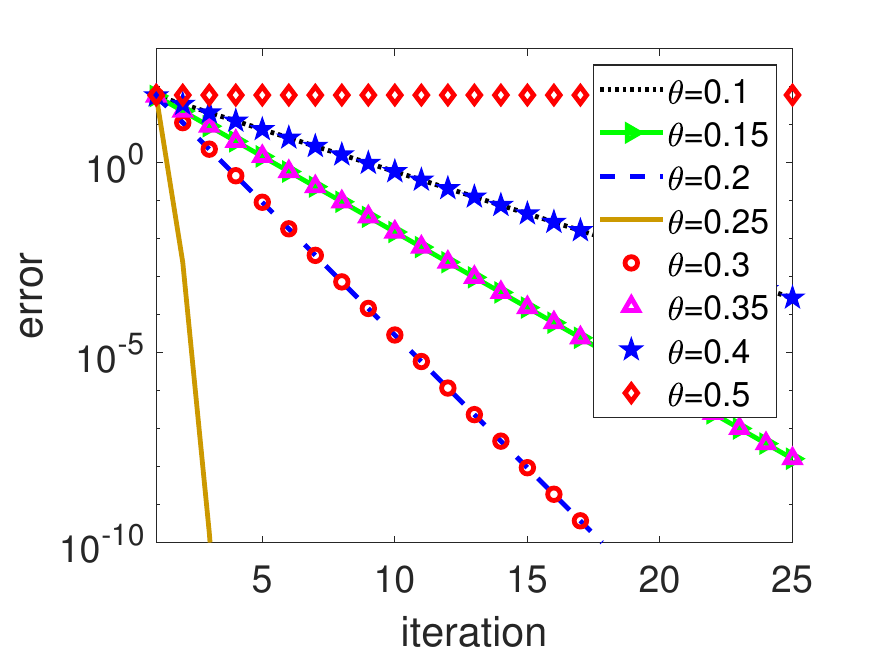} }}
    \caption{Neutral PDE: Left: Convergence of DNWR for larger  Dirichlet domain, Right: Convergence of NNWR for larger first domain. }
    \label{dnwrnnwr_neq_neutral}
    \end{figure}

To summarise, in DNWR, the algorithm performs best when $\theta= 1/2$, while in NNWR, the best choice is $\theta=1/4$. This holds true for both symmetrical and asymmetrical cases, indicating superior performance compared to other values of $\theta$ for which convergence is linear.

    \subsection{Experiments for Multiple Subdomain case}

       For this experiment, we consider the Reaction-Diffusion equation with delay (\ref{eq_1}), with $a_1=0$ and delay $\tau=0.03$.
        We divide the domain $[0,5]\times[0,0.1]$ into multiple subdomains; the capacity to parallelize increases with the number of subdomains.  For discretization, we take $\Delta t=0.002$, and each spatial subdomain contains 10 grid points, so $\Delta x$ varies with the number of subdomains. 
        In DNWR for multi-subdomain, one arrangement is to solve the Dirichlet problem in the middle subdomain first, followed by the Dirichlet-Neumann solution in the adjacent subdomain (see Figure \ref{fig:arr_3}).  For convergence of DNWR for different numbers of subdomains when $\theta =1/2$, see Fig. \ref{dnwrnnwr_multishulin}.
         \begin{figure}[!h]
     \centering
     \includegraphics[width=0.6 \linewidth]{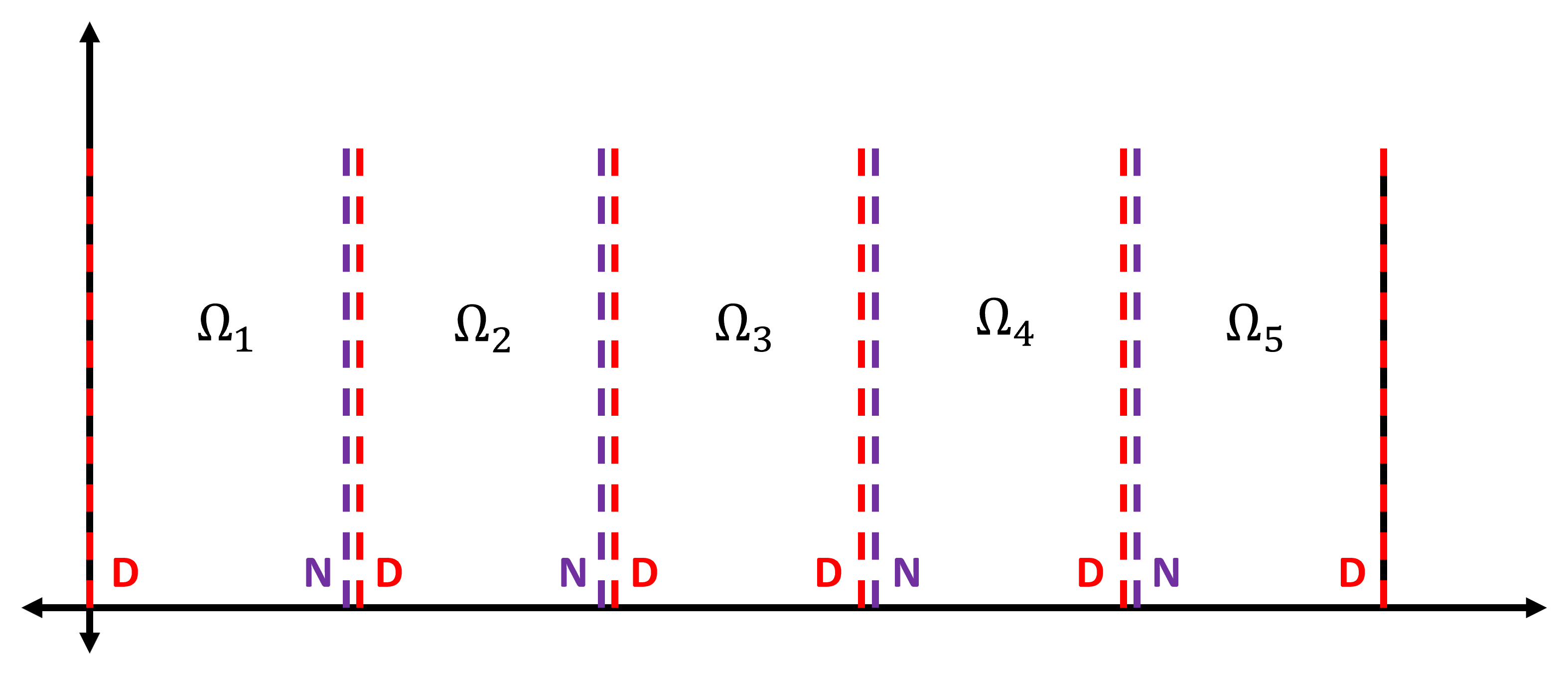}
     \caption{Arrangement of boundary conditions in DNWR. }
     \label{fig:arr_3}
 \end{figure}
 \\
 NNWR for multiple subdomains is a natural extension of the two subdomain case. First, we solve the Dirichlet problem in each subdomain by assuming an initial guess on the interface boundaries, which is then followed by a correction step involving Neumann solving on each subdomain. For convergence of NNWR for different number of subdomains when $\theta =1/4$, see Fig. \ref{dnwrnnwr_multishulin}.\\

\begin{figure}[!h]
    \centering
    \subfloat{{\includegraphics[width=0.462\linewidth]{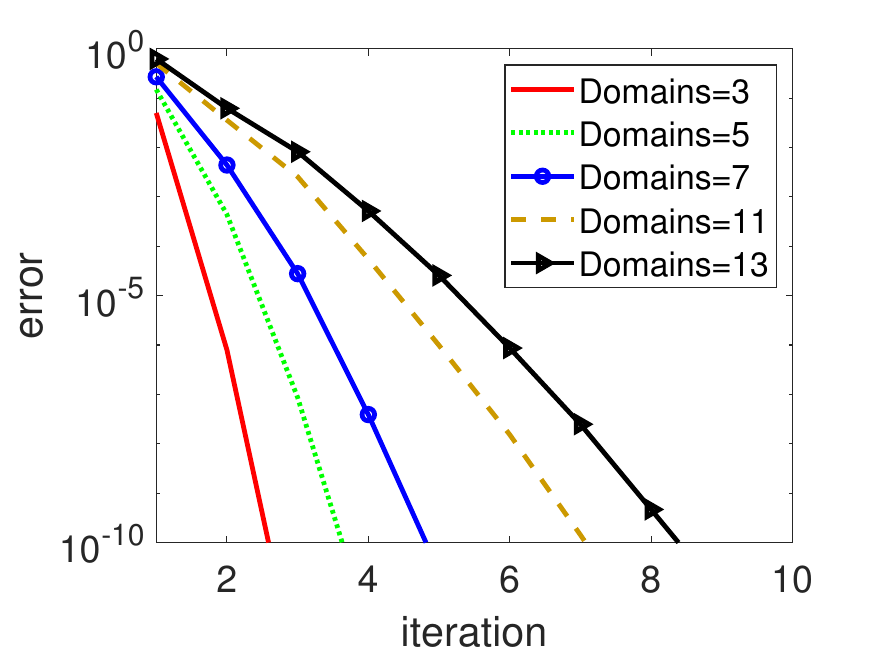} }}
    \qquad
    \subfloat{{\includegraphics[width=0.462\linewidth]{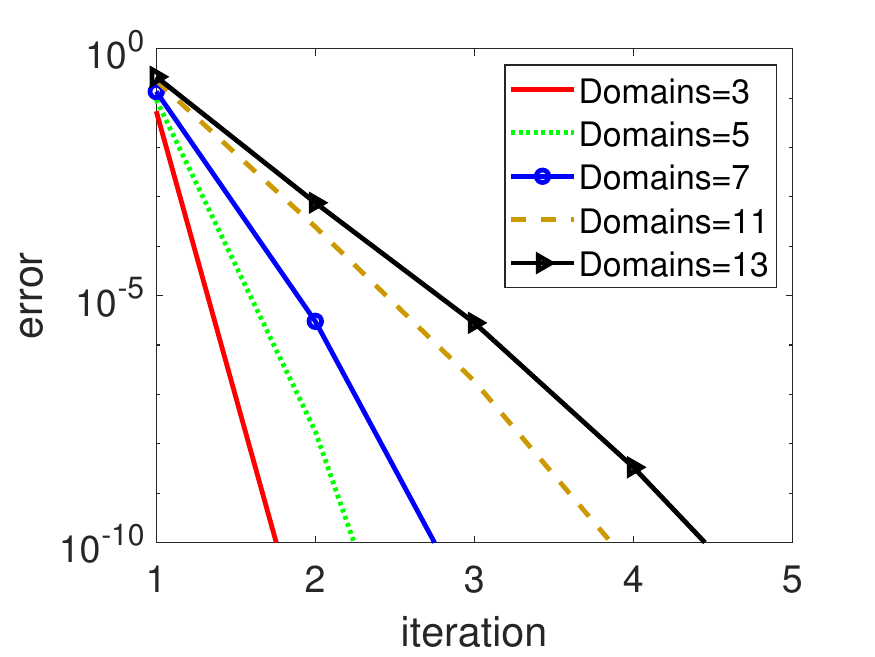} }}
    \caption{Parabolic PDE with time delay when $a_1=0$: Comparision of convergence behaviour for different number of subdomains, Left: DNWR, Right: NNWR. }
    \label{dnwrnnwr_multishulin}
    \end{figure}

 \section{Conclusion}\label{sec3}
 We extend and examine, parallelisable methods DNWR and NNWR for PDE with time delay, especially for the Reaction-Diffusion equation with time delay, the Hyperbolic delay PDE and the Neutral PDE. We established a convergence estimate in one dimension for DNWR and NNWR and also performed numerical experiments. Our findings indicate that a particular choice of the relaxation parameter ensures superlinear convergence both for DNWR and NNWR, and we get convergence in just two iterations when $\theta=1/2$ for DNWR, while NNWR accomplishes the same in two iterations for $\theta =1/4$. 
 We conducted convergence analysis for a symmetrical case of hyperbolic delay partial differential equation (PDE) and Neutral PDE in one dimension; we theoretically proved convergence in two iterations for both DNWR and NNWR algorithms when  $\theta=1/2$ and $\theta=1/4$, respectively. This validation was further confirmed through numerical experiments.
 These new substructuring methods demonstrate significantly better convergence rates for the model problem (\ref{eq_1}) than optimized and quasi-optimized Schwarz methods. Finally, we observed that the number of iterations needed for convergence increases in both DNWR and NNWR methods as the number of subdomains increases.\\~\\
  \textbf{Acknowledgements} The authors would like to thank IIT Bhubaneswar for the research facility.
\\~\\
 \textbf{Data Availability Statement } All the data that was produced or generated during the course of the research has been included in the manuscript.
 \section*{Declarations}
 
 \textbf{Conflict of interest} The authors declare no Conflict of interest.


\bibliographystyle{unsrt}

\end{document}